\numberwithin{equation}{section}
\newcommand{\remove}[1]{}
\newcommand{\Forall}{\;\forall\,}
\newcommand{\tmin}{\Theta_{\min}}
\newcommand{\Eh}{\mathcal{E}_h}
\newcommand{\Sh}{\mathcal{S}_h}
\newcommand{\ra}[1]{\renewcommand{\arraystretch}{#1}}
\newcommand{\dive}{\operatorname{div}}
\newcommand{\Th}{\mathcal{T}_h}
\newtheorem{theorem}{Theorem}
\newtheorem{lemma}{Lemma}
\newtheorem{proposition}{Proposition}
\theoremstyle{definition}
\newtheorem{Def}{Definition}[section]
\begin{document}
\title[Inf-Sup stability of unfitted Stokes  elements]{The Scott-Vogelius finite elements revisited}


\author[J. Guzm\'an]{Johnny Guzm\'an\textsuperscript{\textdagger}}
\address{\textsuperscript{\textdagger} Division of Applied Mathematics, Brown University, Providence, RI 02912, USA}
\email{johnny\_guzman@brown.edu}
\thanks{}

\author[R. Scott]{L. Ridgway Scott\textsuperscript{\textdaggerdbl}}
\address{\textsuperscript{\textdaggerdbl}Departments of  Computer Science and Mathematics,
Committee on Computational and Applied Mathematics,
University of Chicago, Chicago IL 60637, USA}
\email{ridg@uchicago.edu}
\thanks{}

\maketitle

\begin{abstract}  
We prove that the Scott-Vogelius finite elements are inf-sup stable on 
shape-regular meshes for piecewise quartic velocity fields and higher ($k \ge 4$). 

\end{abstract}
\medskip

\keywords{}
\smallskip

\subjclass[2010]{65N30, 65N12, 76D07, 65N85}
\date{}

\section{Introduction}

In 1985 Scott and Vogelius \cite{scott1985norm} (see also \cite{vogelius1983right}) 
presented a family of finite element spaces in two dimensions which
when applied to the Stokes problem, produce velocity approximations that 
are exactly divergence free.  In addition, they proved that the method is stable by proving the pair of spaces satisfy the  so-called {\it  inf-sup}  condition.   In their inf-sup stability proof they require the mesh to be  quasi-uniform. In addition,  the maximum mesh size is assumed to be sufficiently small. In this paper we give an alternative proof of the inf-sup stability that relaxes these restrictions. To be more precise, we prove that the Scott-Vogelius finite element spaces for polynomial order $k \ge 4$ are inf-sup stable 
 assuming only that the family of meshes are 
non-degenerate (shape regular).  
One key aspect in the new proof is to use the stability of the $P^2-P^0$ (or the Bernardi-Raugel \cite{bernardi1985analysis}) finite element spaces. As a result the proof becomes significantly shorter.
In the last paragraph of \cite{falk2013stokes}, a modification
of their techniques is sketched that provides a proof of the inf-sup stability for
the Scott-Vogelius elements that is different from both ours and the original proof.

Recently there has been interest in developing finite element methods that produce divergence free velocities or have better mass conservation properties; see for example 
\cite{
 arnold1992quadratic,
 cockburn2005locally,
 evans2013isogeometric,
 falk2013stokes,
 guzman2012family,
 guzman2014conforming2,
 guzman2014conforming,
2016arXiv160508657H,
john2016divergence,
 linke2009collision,
 lovadina2015divergence,
 neilan2015discrete,
 ref:QinThesis,
 tai2006discrete,
 zhang2007family,
zhang2011divergence}.  
In particular, the review paper \cite{john2016divergence} discusses in depth the effects of mass conservation in simulations. There have also been 
  extensions of the Scott-Vogelius elements to three dimensions \cite{zhang2007family, zhang2011divergence, neilan2015discrete}, although a full general result is  still  out of reach. One difficulty lies in generalizing the concept of singular (or non-singular) vertices to three dimensions; see \cite{neilan2015discrete}.  


To better describe the key differences between the proof of inf-sup stability in this article compared to the original proof found in \cite{scott1985norm}, we  review
the proof of \cite{scott1985norm}.  Roughly speaking, in \cite{scott1985norm} given a pressure function $p$ from the discrete  space, one wants to find a velocity vector 
field  $v$ from the discrete velocity space such that $\dive v$ is close to $p$ 
and $\|v\|_{H^1(\Omega)} \le C \|p\|_{L^2(\Omega)}$. 
To do this, the proof in \cite{scott1985norm} follows roughly three steps. In the first step a velocity field $v_1$ is found  so that $p_1=\dive v_1 -p$ vanishes 
at all vertices.  
At this step it would be desirable to find a vector field $w$ so that $\dive w$ has the same average as $p_1$ on each triangle and such that $\dive w$ vanishes at the vertices. This can be done, however, it is not clear how to do this in a stable way. Therefore,   alternatively in the second step,   a continuous piecewise linear pressure  function $\tilde{p}$ is introduced so that   $p_2=p_1-\tilde{p}$ has average zero on non-overlapping patches of roughly size $Kh$ where $K$ is a sufficiently large constant. Then one finds a vector field $v_2$ so that $\dive v_2=p_2$. As a result

\begin{equation*}
\dive (v_1+v_2)=p-\tilde{p} \text{ on } \Omega.
\end{equation*}
The final step is to find a vector field $v_3$ so that $\dive v_3 \approx \tilde{p}$, where here quasi-uniformity and sufficiently small mesh size is used. Then one arrives at
\begin{equation*}
\dive (v_1+v_2+v_3) \approx p \text{ on } \Omega.
\end{equation*}

In contrast, we reverse the order of the steps. 
First we find a piecewise quadratic $v_1$ such that $p_1=\dive v_1-p$ 
 has average zero on each triangle . This can be done in a stable way, 
using the Bernardi-Raguel finite elements \cite{bernardi1985analysis} (or the $P^2-P^0$ finite element space).  
Then one  finds $v_2$ that is piecewise quartic so that $p_2=\dive v_2-p_1$ vanishes at the vertices and has average zero at each triangle, in which case we require that $\dive v_2$ has average zero on each triangle.  To construct $v_2$   we combine two basis functions that are used implicitly in \cite{scott1985norm}. Finally, following \cite{vogelius1983right} a local argument will find $v_3$ so that $\dive v_3=p_2$.  Hence, $\dive(v_1+v_2+v_3)=p$.

The only reason that we are restricted to $k \ge 4$ is that the $v_2$ constructed above is piecewise quartic. 
In a subsequent paper we show that if we impose further mild restrictions on non-singular vertices then we can choose $v_2$ to be piecewise cubic and, hence, prove inf-sup stability in the case $k=3$.

The paper is organized as follows: In the following section we begin with Preliminaries. 
In Section \ref{proof} we prove the inf-sup stability for $k \ge 4$. 

\section{Preliminaries}\label{preliminaries}

We assume $\Omega$ is a polygonal domain in  two dimensions. We let $\{ \mathcal{T}_h\}_h$ be a family of nondegenerate (shape regular) triangulations of $\Omega$; see \cite{brenner2007mathematical} .   The set of vertices and the set of internal edges are denoted by
\begin{alignat*}{1}
\Sh=&\{ x:  x  \text{ is a vertex of } \Th \}, \\
\Eh=&\{ e: e \text{ is an edge of }  \Th \text{ and } e \not\subset \partial \Omega \}.\\ 
 \end{alignat*}

For every $z \in \Sh$ the function $\psi_z$  is the continuous, piecewise linear 
Lagrange basis function corresponding to the vertex $z$. That is, for every $y \in \Sh$
\begin{equation}\label{eqn:weneedthis}
 \psi_z(y)= 
\begin{cases}
1 \quad \text{ if } y=z,\\
0 \quad \text{ if } y \not=z. 
\end{cases}
\end{equation}
We also define the internal edges and triangles that have $z \in \Sh$ as a vertex:
\begin{alignat*}{1}
\Eh(z)=&\{ e \in \Eh: z \text{ is a vertex of } e \}, \\
\Th(z)=&\{ T \in \Th: z \text{ is a vertex of } T \}.  
\end{alignat*}
Finally, we define the patch
\begin{equation*}
\Omega_h(z)= \bigcup_{T \in \Th(z)} T.
\end{equation*}
The diameter of this patch is denoted by $h_z= \text{diam}(\Omega_h(z))$.

In order to define the pressure space we need to define singular and non-singular vertices. 
Let $z \in \Sh$  and suppose that $\Th(z)=\{ T_1, T_2, \ldots T_N \}$. If $z$ is a boundary vertex then we enumerate the triangles such that $T_1$ and  $T_N$ have a boundary edge.  Moreover, we enumerate them so that $T_j, T_{j+1}$ share an edge for $j=1, \ldots N-1$ and  $T_N$ and $T_1$ share an edge in the case $z$ is an interior vertex. Let $\theta_j $ denote the angle between the edges of $T_j$ originating from $z$.   We define 
\[
\Gamma(z)= 
\begin{cases}
\max \{ |\theta_1+\theta_{2}- \pi|,  \ldots, |\theta_{N-1}+\theta_{N}- \pi|, |\theta_N+\theta_1-\pi|\} & \text{ if }  z \text{ is an interior vertex } \\
\max \{ |\theta_1+\theta_{2}- \pi|,   \ldots, |\theta_{N-1}+\theta_{N}- \pi| \} & \text{ if }  z \text{ is a boundary vertex }.
\end{cases}
\]

Later, we will also need the following definition:
\[
\Theta(z)= 
\begin{cases}
\max \{ |\sin(\theta_1+\theta_{2})|,  \ldots, |\sin(\theta_{N-1}+\theta_{N})|, |\sin(\theta_N+\theta_1)| \}& \text{ if }  z \text{ is an interior vertex } \\
\max \{ |\sin(\theta_1+\theta_{2})|,  \ldots, |\sin(\theta_{N-1}+\theta_{N})| \} & \text{ if }  z \text{ is a boundary vertex }.
\end{cases}
\]

\begin{Def}
A vertex $z \in \Sh$ is a singular vertex if $\Gamma(z)=0$. It is non-singular if $\Gamma(z) >0$.
\end{Def}

\begin{figure}
\centerline{\qquad\qquad\includegraphics[scale=.8]{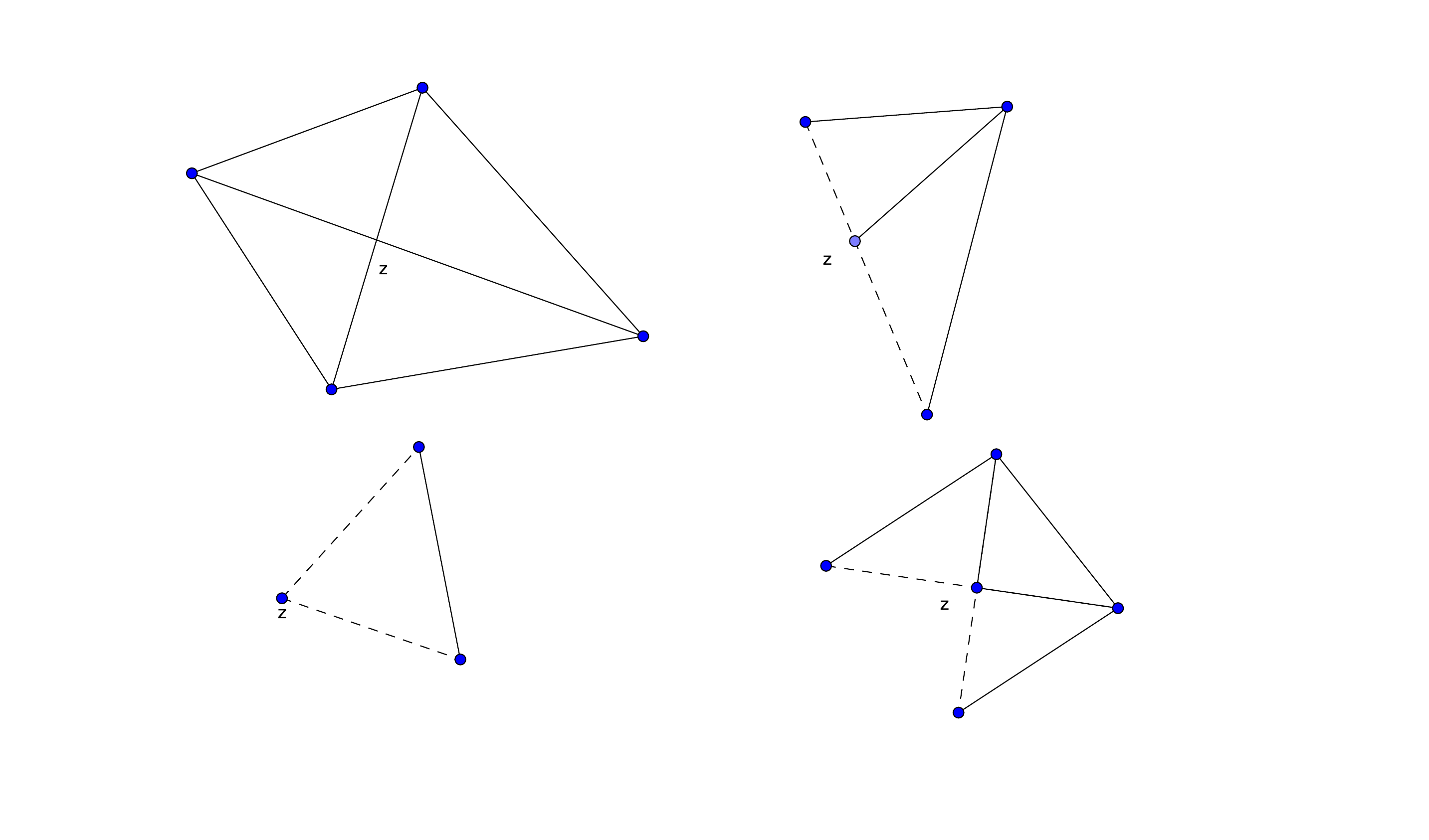}}
\footnotesize
\ra{1.1}
\vspace{-40pt}
\caption{Example of singular vertices $z$. Dashed edges denote boundary edges.}
\label{triangle4}
\end{figure}

We denote all the non-singular vertices by
\begin{equation*}
\Sh^1=\{ x \in \Sh: x \text{ is non-singular } \},
\end{equation*}
and all singular vertices by $\Sh^2=\Sh \backslash \Sh^1$.

Let $q$ be a function such that $q|_T \in C(\overline{T})$\footnote{We define
$ C^k(\overline{T})$ to be the subset of $C^k(T)$ consisting of functions 
with continuous limits on $\overline{T}$, $k\ge 0$.} for all $T \in \Th$. 
For each vertex $z \in \Sh^2$ define
\begin{equation*}
A_h^z(q)= \sum_{j=1}^N (-1)^{N-j} q|_{T_j}(z).
\end{equation*}

Now we are ready to define the Scott-Vogelius finite element spaces for $k \ge 1$:
\begin{alignat*}{1}
V_h^k=&\{v \in [C_0(\Omega)]^2: v|_T \in [P^k(T)]^2 \Forall T \in \Th\}, \\
Q_h^{k-1}=& \{ q \in L_0^2(\Omega): q|_T \in P^{k-1}(T) \Forall T \in \Th,\; A_h^z(q)=0 
\Forall  z  \in \Sh^2\}. 
\end{alignat*}
Here $P^k(T)$ is the space of polynomials of degree less than or equal to $k$ defined on $T$. Also, $L_0^2(\Omega)$ denotes the subspace of $L^2$ of functions that have average zero on $\Omega$.

Note that if all the vertices are non-singular, then $Q_h^{k-1}$ is the space of 
discontinuous piecewise polynomials of degree $k-1$ with average zero on $\Omega$. If singular vertices exist  then the pressure space is constrained on those vertices.  
Finally, the only way an interior vertex $z$ is singular is if $z$ has four edges coming out of it and they lie on two lines.

The definition of $Q_h^{k-1}$ is natural as the following result which was proved in \cite{scott1984conforming} shows.
\begin{lemma}\label{lemma1}
For $k \ge 1$, it holds
\begin{equation*}
\dive V_h^k \subset Q_h^{k-1}.
\end{equation*}
\end{lemma}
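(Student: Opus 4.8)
The plan is to show that for any $v \in V_h^k$, the function $q = \dive v$ lies in $Q_h^{k-1}$ by verifying the three defining properties in turn. First, since $v|_T \in [P^k(T)]^2$, we have $\dive(v|_T) \in P^{k-1}(T)$ for every $T \in \Th$, so $q$ is piecewise polynomial of degree at most $k-1$. Second, because $v \in [C_0(\Omega)]^2$ vanishes on $\partial\Omega$, the divergence theorem gives $\int_\Omega \dive v\,dx = \int_{\partial\Omega} v \cdot n\, ds = 0$, so $q \in L_0^2(\Omega)$. (Strictly, $q$ is only piecewise smooth, but the integral still makes sense as a sum over triangles, and the divergence theorem applies on the union since $v$ is globally continuous with compact support in $\overline\Omega$.)

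The substantive part is showing $A_h^z(q) = 0$ for each singular vertex $z \in \Sh^2$. Fix such a $z$, with surrounding triangles $T_1, \dots, T_N$ enumerated consecutively around $z$; since $z$ is singular and interior, $N = 4$ and the four edges from $z$ lie along two straight lines. Label the edges emanating from $z$ as $e_1, \dots, e_N$, where $e_j$ is shared by $T_{j-1}$ and $T_j$ (indices mod $N$), and let $t_j$ be the unit tangent to $e_j$ pointing away from $z$, and $n_j$ a unit normal to $e_j$. The key computation is to evaluate $\dive(v|_{T_j})$ at $z$ and relate it across the shared edge $e_j$ using the continuity of $v$ along $e_j$: the tangential derivative $\partial_{t_j}(v \cdot t_j)$ and $\partial_{t_j}(v \cdot n_j)$ at $z$ agree when computed from either side, because $v$ is continuous and these are derivatives along the common edge. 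Writing $\dive v = \partial_{t_j}(v\cdot t_j) + \partial_{n_j}(v \cdot n_j)$ in the local orthonormal frame adapted to $e_j$, only the normal-derivative term can jump across $e_j$. Then, forming the alternating sum $\sum_{j=1}^N (-1)^{N-j}\, \dive(v|_{T_j})(z)$, the tangential pieces telescope and the remaining terms involve jumps in normal derivatives across the edges; the singularity condition (the $\theta_j$ pair up to give angles summing to $\pi$, i.e. the edges are collinear in pairs) is exactly what makes the geometric coefficients line up so that the total alternating sum cancels.

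Concretely, I would introduce for each triangle $T_j$ the two edges of $T_j$ through $z$, say $e_j$ and $e_{j+1}$, with tangents $t_j$ and $t_{j+1}$; since these span $\mathbb{R}^2$, I can write $\dive(v|_{T_j})(z)$ in terms of the directional derivatives $\partial_{t_j}$ and $\partial_{t_{j+1}}$ of $v \cdot t_j$ and $v \cdot t_{j+1}$ at $z$, with coefficients depending only on the angle $\theta_j$ between $e_j$ and $e_{j+1}$. The directional derivative $\partial_{t_j}(v \cdot t_j)(z)$ is single-valued (it depends only on $v$ restricted to the edge $e_j$, which is shared), and similarly for the component along $t_j$ of the derivative in any direction within $e_j$. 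Substituting and using $\sin\theta_j$, $\cos\theta_j$ together with the collinearity relations $\theta_j + \theta_{j+1} = \pi$ (so $\sin\theta_{j+1} = \sin\theta_j$, $\cos\theta_{j+1} = -\cos\theta_j$) for a singular vertex, the alternating sum collapses to zero. This is the argument implicit in \cite{scott1985norm}; I expect the bookkeeping of signs in the alternating sum against the geometric coefficients to be the main obstacle, and I would organize it by grouping consecutive pairs $T_j, T_{j+1}$ sharing edge $e_{j+1}$ and checking that each pair's contribution to $A_h^z(\dive v)$ cancels against the normal-derivative continuity along $e_{j+1}$. The boundary-vertex case does not arise here since boundary vertices are never singular by the preceding remark, or is handled identically with the shorter alternating sum.
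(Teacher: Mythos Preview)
Your argument for the interior singular vertex is in the right spirit and can be made to work, though the final ``pairing'' sketch is imprecise: the cancellation does not occur pair-by-pair in the alternating sum $q_1 - q_2 + q_3 - q_4$; rather, after expressing each $\dive v|_{T_j}(z)$ in terms of directional derivatives along the two edge tangents through $z$, the terms regroup so that each difference involves a tangential derivative along a shared edge (hence vanishes by continuity of $v$). The paper carries this out more cleanly by applying the Piola transform $\hat v(\hat x) = M^{-1} v(M\hat x + z)$ with $M = [t_1, t_2]$, which straightens the configuration to the four coordinate quadrants and preserves the divergence; the alternating sum then reduces to four differences of the form $\partial_1(\hat v_1^i - \hat v_1^j)(0)$ and $\partial_2(\hat v_2^i - \hat v_2^j)(0)$, each vanishing because $\hat v$ is continuous across a coordinate axis. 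Your oblique-frame computation is the same identity written without the change of variables.

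There is, however, a genuine gap: your claim that ``boundary vertices are never singular'' is false. The remark in the paper you seem to be invoking concerns only \emph{interior} vertices; boundary singular vertices do occur (for instance, $N=2$ with the two boundary edges collinear, or $N=3$ with two collinear interior edges). For such vertices the alternating sum does \emph{not} vanish by edge-continuity alone---you must use the boundary condition $v|_{\partial\Omega} = 0$, which forces the tangential derivative of $v$ along the boundary edges at $z$ to vanish. The paper handles this by extending $v$ by zero to the exterior of $\Omega$, thereby reducing to the interior case with (up to) four triangles; you would need either this extension trick or an explicit use of $v=0$ on $\partial\Omega$ to close the argument.
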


This lemma is a consequence of the following result.  
\begin{lemma}\label{lemmasingular}
Let $v$ be a vector field such that $v|_{\overline{T}} \in C^1(\overline{T})$ for every $T \in \mathcal{T}_h$. In addition, assume $v \in C(\Omega)$ and $v$ vanishes on $\partial \Omega$  . Then, $A_h^z(\dive v)=0 $ for every singular vertex $z \in S_h^2$.  
\end{lemma}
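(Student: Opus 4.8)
The plan is to work locally around a fixed singular vertex $z \in \Sh^2$, exploiting the very rigid geometry that singularity forces. By definition $\Gamma(z) = 0$, so every consecutive pair of angles satisfies $\theta_j + \theta_{j+1} = \pi$; as the excerpt notes, for an interior vertex this means $N=4$ and the four edges emanating from $z$ lie on just two straight lines through $z$, while for a boundary vertex the edges lie on a single line (so $N=2$). I will treat the interior case in detail; the boundary case is the same computation with fewer terms. Fix Cartesian coordinates centered at $z$ so that the two lines through $z$ are spanned by unit vectors $e_1$ and a second unit vector, and relabel so $T_1, T_2, T_3, T_4$ go around $z$ with $T_j, T_{j+1}$ sharing the edge $\ell_j$ through $z$; the key point is that $\ell_1$ and $\ell_3$ are opposite rays on one line, and $\ell_2, \ell_4$ are opposite rays on the other.

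The core computation is to evaluate $\dive v|_{T_j}(z)$ and show the alternating sum $\sum_{j=1}^4 (-1)^{4-j}\dive v|_{T_j}(z)$ telescopes to zero. Write $v = (v^1, v^2)$. Since $v$ is continuous at $z$ across each shared edge and each $v^i|_{\overline{T_j}} \in C^1(\overline{T_j})$, the tangential derivative of $v^i$ along each edge $\ell_j$ is single-valued at $z$ (computed from either side it agrees). So across edge $\ell_j$, the jump in $\nabla v^i|(z)$ between $T_j$ and $T_{j+1}$ is purely in the direction normal to $\ell_j$. First I would handle the case where $z$ is an interior point of $\Omega$ away from $\partial\Omega$, using only the continuity of $v$; the vanishing on $\partial\Omega$ is needed only when $z$ is a boundary vertex, where it provides the extra tangential-derivative information along the boundary edges $\ell_1$ (in $T_1$) and $\ell_{N-1}$ (in $T_N$) that closes the telescoping sum. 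Concretely, writing $\tau_j$ for a unit tangent to $\ell_j$ and $n_j$ for a unit normal, one expands $\dive v|_{T_j}(z)$ in the basis $\{\partial_{\tau_j}, \partial_{n_j}\}$, uses that the $\partial_{\tau_j}$-part is shared between $T_j$ and $T_{j+1}$, and uses that because $\ell_1 \parallel \ell_3$ and $\ell_2 \parallel \ell_4$ the four normals come in two pairs, so the alternating sign $(-1)^{4-j}$ makes consecutive normal contributions cancel around the loop.

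The main obstacle, and where I would spend the most care, is the bookkeeping of the geometry: keeping track of orientations of the tangents and normals as one goes around $z$, and verifying that the alternating signs in $A_h^z$ are exactly matched to the way the normal-derivative jumps add up when the edges are pairwise parallel. It is easy to get a sign wrong here, so I would set it up by introducing, for each $j$, the quantity $[\![\nabla v^i]\!]_{\ell_j}(z) := \nabla v^i|_{T_{j+1}}(z) - \nabla v^i|_{T_j}(z) = c_j^i\, n_j$ for some scalar $c_j^i$ (using tangential continuity), write $\nabla v^i|_{T_{j}}(z) = \nabla v^i|_{T_1}(z) + \sum_{m<j} c_m^i n_m$, substitute into $\sum_j (-1)^{4-j} \operatorname{tr}$-type expressions, and observe the telescoped coefficients vanish precisely because $n_3 = \pm n_1$, $n_4 = \pm n_2$ with the signs dictated by the singular configuration. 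A clean way to organize this is to reduce, by a rotation, to the canonical singular cross where the two lines are the coordinate axes, compute $\dive v|_{T_j}(z) = \partial_1 v^1|_{T_j}(z) + \partial_2 v^2|_{T_j}(z)$ directly, and use that $\partial_2 v^1, \partial_2 v^2$ are continuous across the vertical edges and $\partial_1 v^1, \partial_1 v^2$ across the horizontal edges; the alternating sum then collapses in two lines. I expect the boundary-vertex case to need the hypothesis $v = 0$ on $\partial\Omega$ exactly to supply the missing continuity relation that the "wrap-around" edge provided in the interior case.
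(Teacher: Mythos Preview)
Your approach is essentially the paper's: exploit the rigid geometry at a singular vertex, use continuity of $v$ to match tangential derivatives across each edge at $z$, and watch the alternating sum collapse. Two small corrections are worth making. First, your ``clean way'' of reducing by a \emph{rotation} to the coordinate cross does not work in general, because the two lines through an interior singular vertex need not be orthogonal; the paper instead uses the Piola transform $\hat v(\hat x)=M^{-1}v(M\hat x+z)$ with $M=[t_1,t_2]$, which maps the two (possibly oblique) lines to the coordinate axes while preserving the divergence, and then your two-line computation goes through verbatim. Your direct $\tau_j,n_j$ bookkeeping would also work, but the Piola map is exactly the device that makes it as clean as you want. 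Second, boundary singular vertices are not only the $N=2$ case; the paper handles all boundary cases at once by extending $v$ by zero to $\Omega^c$ (using $v=0$ on $\partial\Omega$) and invoking the interior argument, which is precisely the ``missing wrap-around relation'' you anticipated.
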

We prove this result for completeness in the appendix
following the argument given in \cite{scott1984conforming}.

The goal of this article is to prove the inf-sup stability of the pair $V_h^k, Q_h^{k-1}$ for $ k \ge 4$ . We recall the definition of inf-sup stability.
\begin{Def}
The pair of spaces $V_h^k, Q_h^{k-1}$ are inf-sup stable on a family of triangulations 
$\{\Th\}_h$ if there exists $\beta>0 $ such that for all $h$
\begin{equation}\label{inf-sup}
\beta \, \|q\|_{L^2(\Omega)} \le \sup_{ v \in V_h^{k},  v \not\equiv 0}  \frac{\int_{\Omega} q \dive v  \, dx}{ \|v\|_{H^1(\Omega)}}  \quad \quad \forall q \in  Q_h^{k-1}. 
\end{equation}
\end{Def}

Let $z \in \Sh$ and $e \in \Eh(z)$  where $e=\{ z, y\}$.
Define $t_e^z=\frac{(y-z)}{|e|}$ to be the unit vector that is tangent to $e$.   
It is clear that  
\begin{equation}\label{tpsi}
t_e^z \cdot \nabla \psi_y= \frac{-1}{|e|}  \text{ on }  e. 
\end{equation}

We will also need to compute the derivatives of $\psi_y$  in all directions. Suppose that $T \in \Th(y)$ and let $g$ be the edge of $T$ that is opposite to $y$. If we let $n_T^y$ 
be the unit normal vector to $g$ that points out of $T$ then
\begin{equation}\label{nablapsi}
\nabla \psi_y|_T = - \frac{1}{h_T^y} n_T^y,   
\end{equation}
where $h_T^y$  is the distance of $y$ to the line defined by the edge $g$. If 
$z$ is another vertex of $T$ and  we denote the edge $e=\{ z, y\}$  then  a simple calculation gives
\begin{equation}\label{hTj}
h_T^y= \sin(\theta) |e|
\end{equation}
where $\theta$ is the angle between the edges of $T$ originating from $z$. See Figure \ref{triangle2} for an illustration.

\begin{figure}
\vspace{-30pt}
\centerline{\includegraphics[scale=1.2]{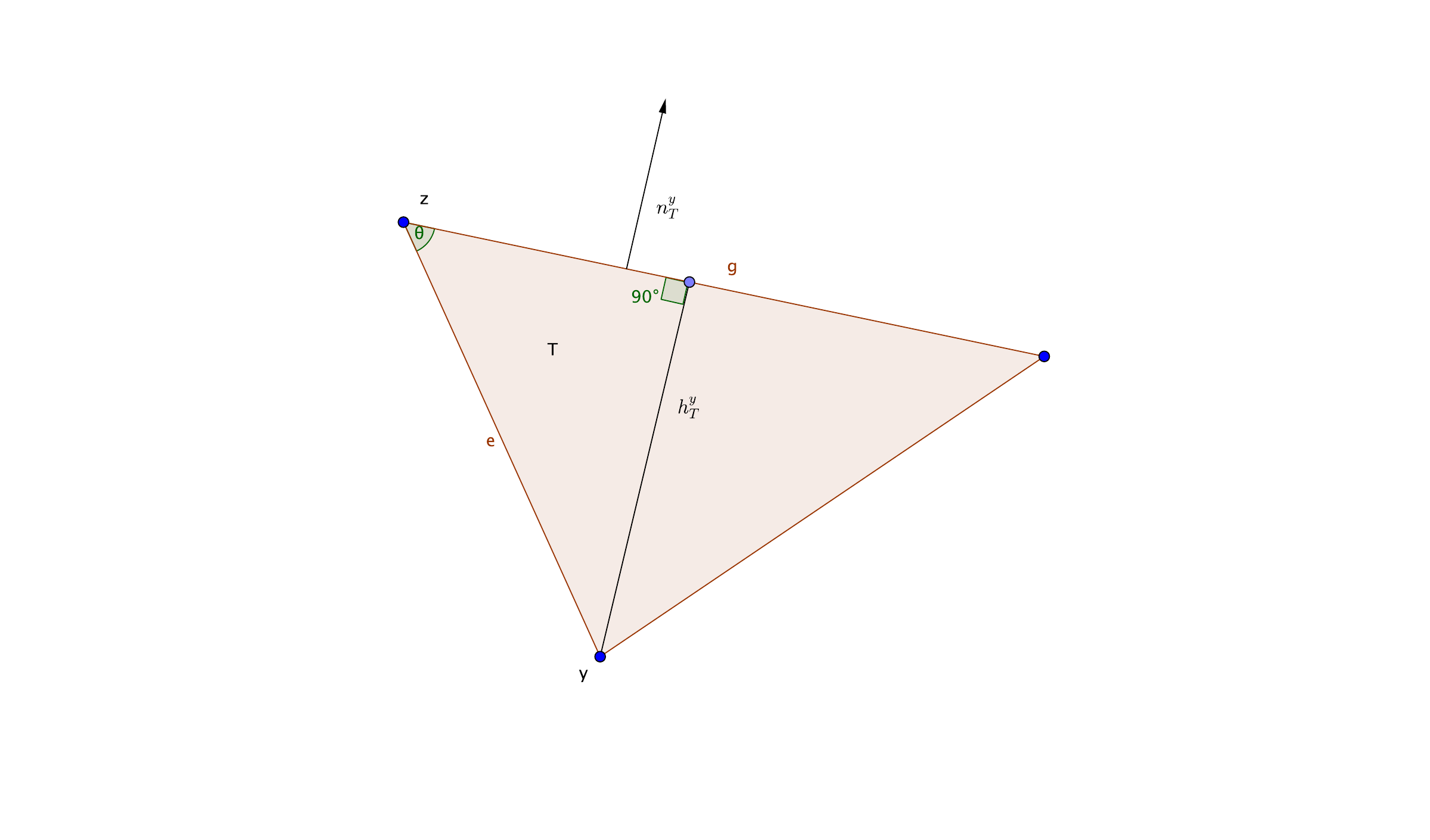}}
\footnotesize
\ra{1.1}
\vspace{-90pt}
\caption{Illustration of geometric quantities in one triangle}
\label{triangle2}
\end{figure}

\section{Establishing the inf-sup condition}\label{proof}

\subsection{Preliminary stability results}

In order to prove the inf-sup stability of the Scott-Vogelius finite element spaces  we need two well known results. The first follows from the stability of the Bernardi-Raugel \cite{bernardi1985analysis} finite element space.
\begin{proposition}\label{prop1}
Let $k \ge 1$. There exists a constant $\alpha_1$ such that for every $p \in Q_h^{k-1}$  there exists a $v \in V_h^2$ such that 
\begin{equation*}
\int_{T} \dive v \, dx =\int_T p \, dx \quad \text{ for all } T \in \Th,
\end{equation*}  
and 
\begin{equation*}
\|v\|_{H^1(\Omega)} \le \alpha_1  \|p\|_{L^2(\Omega)}. 
\end{equation*}
The constant $\alpha_1$ is independent of $p$ and only depends on the shape regularity of the mesh and $\Omega$.
\end{proposition}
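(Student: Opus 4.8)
The plan is to derive Proposition~\ref{prop1} from the known inf-sup stability of the $P^2$--$P^0$ pair (equivalently, of the Bernardi--Raugel element). The target equation only asks that $\dive v$ match $p$ in the mean on every triangle, which is exactly the statement that the $L^2$-orthogonal projection $P_0 p$ of $p$ onto piecewise constants lies in $\dive V_h^2$ in a stable fashion. So the first step is to recall that there exists $\beta_0 > 0$, depending only on shape regularity and $\Omega$, such that
\begin{equation*}
\beta_0 \, \|q\|_{L^2(\Omega)} \le \sup_{w \in V_h^2,\, w \not\equiv 0} \frac{\int_\Omega q \, \dive w \, dx}{\|w\|_{H^1(\Omega)}} \qquad \forall q \in Q_h^0,
\end{equation*}
where $Q_h^0 = L_0^2(\Omega) \cap \{\text{piecewise constants}\}$; this is the classical result of \cite{bernardi1985analysis} (note $V_h^1 \oplus \text{(edge bubbles)} \subset V_h^2$, so stability transfers to $V_h^2$).

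Next I would set $\bar p := P_0 p$, the piecewise-constant function whose value on each $T$ is the average of $p$ over $T$. Since $p \in Q_h^{k-1} \subset L_0^2(\Omega)$, we have $\int_\Omega \bar p \, dx = \int_\Omega p \, dx = 0$, so $\bar p \in Q_h^0$. Applying the inf-sup condition above to $q = \bar p$ together with the standard equivalence between the inf-sup condition and solvability of the associated saddle-point/divergence problem (closed range of $\dive\colon V_h^2 \to Q_h^0$), there exists $v \in V_h^2$ with $\dive v = \bar p$ (the surjectivity uses that $\bar p$ is already in the finite-dimensional space $Q_h^0$ onto which $\dive V_h^2$ surjects, by the inf-sup bound) and
\begin{equation*}
\|v\|_{H^1(\Omega)} \le \beta_0^{-1} \|\bar p\|_{L^2(\Omega)}.
\end{equation*}
Then for each $T \in \Th$, $\int_T \dive v \, dx = \int_T \bar p \, dx = \int_T p \, dx$, which is the first claimed identity. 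Finally, $\|\bar p\|_{L^2(\Omega)} = \|P_0 p\|_{L^2(\Omega)} \le \|p\|_{L^2(\Omega)}$ since $P_0$ is an orthogonal projection, so the bound $\|v\|_{H^1(\Omega)} \le \alpha_1 \|p\|_{L^2(\Omega)}$ holds with $\alpha_1 = \beta_0^{-1}$, which depends only on the shape regularity and $\Omega$.

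The only genuinely substantive input is the inf-sup stability of the $P^2$--$P^0$ (Bernardi--Raugel) pair on shape-regular meshes, which I would quote from \cite{bernardi1985analysis} rather than reprove; the rest is the routine reduction from "matching the mean on each triangle" to "hitting the piecewise-constant projection." A minor technical point to state carefully is that the relevant divergence map is $\dive\colon V_h^2 \to Q_h^0$ is onto: this is not automatic from an inf-sup inequality in infinite dimensions, but here the range space $Q_h^0$ is finite-dimensional and $\dive V_h^2 \subseteq Q_h^0$, so the inf-sup bound forces equality. One should also note that $V_h^2$ here means the $P^2$ velocity space with zero boundary conditions, so the constructed $v$ indeed lies in $V_h^2 \subset V_h^k$ for all $k \ge 2$, consistent with the hypothesis $k \ge 1$ on $p$ (for $k=1$ one simply takes $v \in V_h^2 \supsetneq V_h^1$, which is allowed).
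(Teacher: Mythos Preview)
The paper does not give a proof of this proposition; it simply attributes the result to the stability of the Bernardi--Raugel element and moves on. Your proposal fills in exactly the standard details behind that citation, and the overall strategy---project $p$ to piecewise constants and invoke $P^2$--$P^0$ inf-sup stability---is the right one.

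One point should be tightened. You assert $\dive V_h^2 \subseteq Q_h^0$ and then conclude that there is $v \in V_h^2$ with $\dive v = \bar p$. Neither is literally true: the divergence of a piecewise-$P^2$ field is piecewise $P^1$, not piecewise constant. What the $P^2$--$P^0$ inf-sup inequality actually yields is surjectivity (with the stability bound) of the composite map $P_0 \circ \dive \colon V_h^2 \to Q_h^0$; equivalently, there exists $v \in V_h^2$ with $P_0(\dive v) = \bar p$ and $\|v\|_{H^1(\Omega)} \le \beta_0^{-1}\|\bar p\|_{L^2(\Omega)}$. That already gives $\int_T \dive v\,dx = \int_T \bar p\,dx = \int_T p\,dx$ for every $T$, which is all the proposition claims. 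So the argument is correct once the spurious pointwise equality $\dive v = \bar p$ is replaced by the mean-value statement you in fact go on to use.
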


The second result we need is contained in Lemma 2.5 of \cite{vogelius1983right}. A three dimensional analog is given in \cite{neilan2015discrete}. We provide the proof here for completeness.
\begin{proposition}
Let $k \ge 1$. There exists a constant $\alpha_2$ such that for every $T \in \Th$ and any $p_T \in P^{k-1}(T)$ with $p$ vanishing on the vertices of $T$ and $\int_T p_T dx=0$ there exists a $v_T \in [P^k(T)]^2$ with $v_T =0$ on $\partial T$ such that  
\begin{equation*}
\dive v_T =p_T \text{ on } T,
\end{equation*}
and
\begin{equation*}
\|v_T\|_{H^1(T)} \le \alpha_2 \|p_T\|_{L^2(T)}.
\end{equation*}
Here the constant $\alpha_2$  depends only
on the shape regularity of the mesh and $k$. 
\end{proposition}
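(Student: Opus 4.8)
The plan is to reduce to a reference configuration by affine scaling and then build $v_T$ explicitly on a fixed reference triangle $\hat T$, where the problem becomes a finite-dimensional linear-algebra statement. First I would pick an invertible affine map $F_T:\hat T\to T$, $F_T(\hat x)=B_T\hat x+b_T$, and pull back: writing $p_T=\hat p\circ F_T^{-1}$ and $v_T=\frac{1}{\det B_T}\,B_T\,(\hat v\circ F_T^{-1})$ (the contravariant Piola transform), one has $\dive v_T=\frac{1}{\det B_T}(\widehat{\dive}\,\hat v)\circ F_T^{-1}$, so $\dive v_T=p_T$ on $T$ is equivalent to $\widehat{\dive}\,\hat v=\det B_T\cdot\hat p$ on $\hat T$. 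Moreover $v_T=0$ on $\partial T$ iff $\hat v=0$ on $\partial\hat T$, and $\hat p$ vanishes at the vertices of $\hat T$ with $\int_{\hat T}\hat p=0$ precisely when $p_T$ does (the vertex conditions and mean-zero are affine-invariant, the latter up to the nonzero factor $\det B_T$). So it suffices to prove the statement on $\hat T$: for every $\hat p\in P^{k-1}(\hat T)$ vanishing at the three vertices with $\int_{\hat T}\hat p=0$, there is $\hat v\in[P^k(\hat T)]^2$ with $\hat v=0$ on $\partial\hat T$ and $\widehat{\dive}\,\hat v=\hat p$.

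The core step is the reference-element existence claim. Let $W=\{\hat v\in[P^k(\hat T)]^2:\hat v=0\text{ on }\partial\hat T\}$ and let $X=\{\hat p\in P^{k-1}(\hat T):\hat p(\text{vertices})=0,\ \int_{\hat T}\hat p=0\}$. I claim $\widehat{\dive}$ maps $W$ into $X$ and is onto. The inclusion $\widehat{\dive}(W)\subseteq X$ is exactly the content of Lemma~\ref{lemmasingular} applied on a single triangle: if $\hat v$ vanishes on $\partial\hat T$ then $\int_{\hat T}\widehat{\dive}\,\hat v=\int_{\partial\hat T}\hat v\cdot n=0$, and the vertex values of $\widehat{\dive}\,\hat v$ vanish because a vertex of a single triangle is a "singular vertex" in the sense of the paper (the one-triangle alternating sum $A^z$ reduces to $\pm(\widehat{\dive}\,\hat v)(z)$, which Lemma~\ref{lemmasingular} forces to be zero; alternatively this is a direct computation since near a corner $\hat v$ vanishes along both incident edges, forcing both partials, hence the divergence, to vanish at the corner). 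For surjectivity I would count dimensions. A clean way: show $\widehat{\dive}:W\to X$ is injective modulo a space of known dimension, or simply verify $\dim W-\dim\ker(\widehat{\dive}|_W)=\dim X$. Here $\ker(\widehat{\dive}|_W)$ consists of divergence-free $[P^k]^2$ fields vanishing on $\partial\hat T$; such a field is $\operatorname{curl}\phi$ for a scalar $\phi\in P^{k+1}$, and $\hat v=0$ on $\partial\hat T$ forces $\phi$ to be constant along each edge, i.e. $\phi$ equals a fixed constant on all of $\partial\hat T$ (the three edge-constants agree at the shared vertices); so $\ker$ is parametrized by $\{\phi\in P^{k+1}:\phi\equiv 0\text{ on }\partial\hat T\}$, which has dimension $\binom{k+1}{2}=\dim P^{k-2}$ via a bubble factorization $\phi=\hat b\,\tilde\phi$ with $\hat b$ the cubic bubble. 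Then $\dim W=2\cdot\dim(P^k\text{ vanishing on }\partial\hat T)=2\binom{k-1}{2}$ and $\dim X=\dim P^{k-1}-3-1=\binom{k+1}{2}-4$; checking $2\binom{k-1}{2}-\binom{k+1}{2}=\binom{k+1}{2}-4$ is a one-line identity (both sides equal $(k-1)(k-2)-\binom{k+1}{2}$... this arithmetic I would do carefully). Since the map lands in $X$, injectivity-on-the-quotient plus equality of dimensions gives surjectivity. I would double-check the $k=1,2$ edge cases separately, since $P^{k-2}$ and related spaces degenerate there, but the statement is consistent (for $k=1$, $X=\{0\}$ and $v_T=0$ works; for $k=2$, $X$ is $\operatorname{span}$ of the three edge-midpoint-type quadratics with the mean-zero constraint, handled directly).

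Finally, the stability bound. On the fixed reference triangle $\hat T$ the assignment $\hat p\mapsto\hat v$ can be taken linear (choose, e.g., the minimal-norm right inverse of $\widehat{\dive}|_W$), and since $W,X$ are finite-dimensional with $k$ fixed, $\|\hat v\|_{H^1(\hat T)}\le\hat C(k)\,\|\hat p\|_{L^2(\hat T)}$ by equivalence of norms, with $\hat C(k)$ depending only on $k$. Transporting back through $F_T$ and using the standard scaling estimates $\|B_T\|\lesssim h_T$, $\|B_T^{-1}\|\lesssim h_T^{-1}$, $|\det B_T|\sim h_T^2$, all with constants controlled by the shape-regularity of $\Th$, one gets $\|v_T\|_{H^1(T)}\le\alpha_2\|p_T\|_{L^2(T)}$ with $\alpha_2=\alpha_2(k,\text{shape regularity})$; the powers of $h_T$ from the Piola transform, the chain rule on gradients, and the two change-of-variables in the integrals cancel exactly, as they must since both sides are scale-invariant in the right bookkeeping. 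I expect the only genuinely delicate point to be the surjectivity dimension count on $\hat T$ — getting $\ker(\widehat{\dive}|_W)$ exactly right (the edge-constant-matching subtlety for $\phi$) and confirming the dimension identity — rather than the scaling, which is routine.
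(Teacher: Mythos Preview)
Your overall plan matches the paper's proof: show $\widehat{\dive}:W\to X$ is onto by a dimension count, identifying the kernel via a stream function, and then obtain the bound by a Piola scaling argument. The paper does the dimension count on a generic $T$ and invokes scaling only at the end, but that is cosmetic.

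The genuine gap is in your identification of $\ker(\widehat{\dive}|_W)$. If $\hat v=\operatorname{curl}\phi$ vanishes on an edge, then \emph{both} components of $\operatorname{curl}\phi=(\partial_2\phi,-\partial_1\phi)$ vanish there, so the full gradient $\nabla\phi$ is zero on each edge of $\hat T$, not merely the tangential derivative. Thus $\phi-c$ has a \emph{double} zero along every edge, forcing the factorization $\phi-c=\hat b^{\,2}\psi$ with $\psi\in P^{k-5}(\hat T)$, not $\phi-c=\hat b\,\tilde\phi$ with $\tilde\phi\in P^{k-2}$. Consequently $\dim\ker(\widehat{\dive}|_W)=\dim P^{k-5}=\tfrac12(k-3)(k-4)$ for $k\ge5$ and $0$ otherwise. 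With your (too large) kernel the identity you propose to check is false for every integer $k$; with the correct kernel it becomes
\[
(k-1)(k-2)-\tfrac12(k-3)(k-4)=\tfrac12 k(k+1)-4,
\]
which is precisely the computation carried out in the paper. Once this is fixed, your argument coincides with the paper's. (A minor slip: for $k=2$ the space $X$ consists of \emph{linear} functions vanishing at the three vertices with mean zero, hence $X=\{0\}$; it is not spanned by quadratics.)
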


\begin{proof}
Consider the spaces
\begin{alignat*}{1}
M^{k-1}(T)= & \{q\in P^{k-1}(T):  q \text{ vanishes at the vertices of } T \text{ and }  \int_T q(x)\,dx=0 \} \\
\hbox{and}\quad B^k(T)=&\{v \in [P^k(T)]^2: v=0 \text{ on }  \partial T\} =\{b_T v: v \in  [P^{k-3}(T)]^2\}.
\end{alignat*}
Here $b_T$ is the cubic bubble of $T$ that vanishes on $\partial T$. 
It is easy to show that the 
\[
\text{ dim } M^{k-1}(T) {=}
\begin{cases}
\frac{1}{2}k(k+1) - 4 &  \text{ for } k \ge 3, \\
 0  & \text{ for } k \le 2,
\end{cases}
\]
and
 \[
\text{ dim } B^{k}(T) {=}
\begin{cases}
(k-2)(k-1)&  \text{ for } k \ge 3, \\
 0  & \text{ for } k \le 2.
\end{cases}
\]
Moreover,  it is clear that $\dive B^k(T)\subset M^{k-1}(T)$. Finally, we set 
$Z^{k}(T)=\{ v \in B^k(T): \dive v \equiv 0 \text{ on }  T \} $
then a simple argument gives
\begin{equation*}
Z^{k}(T)= \{\text{ curl } (\psi b_T^2) : \psi  \in P^{k-5}(T) \}.
\end{equation*}
Hence,
 \[
\text{ dim } Z^{k}(T) {=}
\begin{cases}
\frac{1}{2}(k-4)(k-3) &  \text{ for } k \ge 5, \\
 0  & \text{ for } k \le 4.
\end{cases}
\]
We claim that $  M^{k-1}(T) = \dive B^k(T)$. To show this, we will use a dimension count. First note that
\begin{equation*}
\text{ dim } [B^k(T)] = \text{ dim } [\dive B^k(T)] + \text{ dim } [Z^{k}(T)].
\end{equation*}

For $k \le 4$ we know that $ Z^{k}(T)$ is empty so that $\text{ dim } [\dive B^k(T)]= \text{ dim } [B^k(T)] = \text{ dim } [M^{k-1}(T)]$. For $k \ge 4$
\begin{equation*} 
\begin{split}
 \text{ dim } [\dive B^k(T)] &= \text{ dim } [B^k(T)]- \text{ dim } [Z^{k}(T)]\\
&=(k-2)(k-1)-\frac{1}{2}(k-4)(k-3)\\
&=\frac{1}{2} k^2 -3k+2 +(7/2)k-6 \\
&=\frac{1}{2} k^2 +\frac{1}{2} k -4=\dim [M^{k-1}(T)].
\end{split}
\end{equation*}

Thus we have proved that for any $p_T \in M^{k-1}(T)$ there exists a $v_T$ such that  
\begin{equation*}
\dive v_T =p_T \text{ on } T.
\end{equation*}
The bound 
\begin{equation*}
\|v_T\|_{H^1(T)} \le \alpha_2 \|p_T\|_{L^2(T)}
\end{equation*}
 follows from a scaling argument mapping to the reference element  with  the Piola transform which preserves the divergence. 
\end{proof}

Summing up the result for every $T \in \mathcal{T}_h$ we can prove the following lemma.
\begin{lemma}\label{lemma3}
Let $k \ge 1$. Let $\alpha_2$ be the constant from the previous proposition. For every $p \in Q_h^{k-1}$ such that $p(z) =0$ for all $z \in \Sh$ and $\int_T p \, dx =0$ for all $T \in \Th$ there exists $v \in V_h^k$ such that 
\begin{equation*}
\dive v = p\text{ on }  \Omega,
\end{equation*}
and
\begin{equation*}
\|v\|_{H^1(\Omega)} \le \alpha_2 \|p\|_{L^2(\Omega)}.
\end{equation*}
\end{lemma}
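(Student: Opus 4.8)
The plan is to prove the lemma by applying the preceding proposition triangle by triangle and then gluing the resulting local vector fields together. Fix $p \in Q_h^{k-1}$ with $\int_T p\,dx = 0$ for every $T \in \Th$ and with $p$ vanishing at all vertices, interpreted as $p|_T(z) = 0$ for each $T \in \Th$ and each vertex $z$ of $T$. Then for each $T \in \Th$ the polynomial $p|_T \in P^{k-1}(T)$ vanishes at the vertices of $T$ and has zero average on $T$, i.e. $p|_T$ lies in the space $M^{k-1}(T)$ appearing in the proof of the previous proposition. Hence that proposition produces $v_T \in [P^k(T)]^2$ with $v_T = 0$ on $\partial T$, $\dive v_T = p|_T$ on $T$, and $\|v_T\|_{H^1(T)} \le \alpha_2 \|p|_T\|_{L^2(T)}$.

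Next I would assemble the global field $v$ by setting $v|_T = v_T$ for every $T \in \Th$. Because each $v_T$ vanishes on all of $\partial T$, the traces of $v$ from the two triangles sharing an interior edge agree (both are zero), so $v$ is continuous on $\Omega$; and on a boundary triangle $v_T$ vanishes on the part of $\partial T$ lying in $\partial\Omega$, so $v$ vanishes on $\partial\Omega$. Thus $v \in [C_0(\Omega)]^2$ and $v|_T \in [P^k(T)]^2$, so $v \in V_h^k$. Since $v$ is continuous and piecewise polynomial it lies in $[H^1(\Omega)]^2$ with no jump contributions across edges, and its distributional divergence coincides with the piecewise divergence: $\dive v = \dive v_T = p|_T$ on each $T$, whence $\dive v = p$ on $\Omega$.

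Finally, the stability bound follows by squaring and summing the local estimates. Since $v$ agrees with $v_T$ on each $T$ and $v$ is globally $H^1$, the norm decomposes additively over $\Th$, so
\begin{equation*}
\|v\|_{H^1(\Omega)}^2 = \sum_{T \in \Th} \|v_T\|_{H^1(T)}^2 \le \alpha_2^2 \sum_{T \in \Th} \|p|_T\|_{L^2(T)}^2 = \alpha_2^2 \|p\|_{L^2(\Omega)}^2,
\end{equation*}
which gives $\|v\|_{H^1(\Omega)} \le \alpha_2 \|p\|_{L^2(\Omega)}$. There is essentially no serious obstacle in this argument; the only points requiring a little care are verifying that the hypotheses of the previous proposition really hold on each $T$ (so that $p|_T \in M^{k-1}(T)$) and that the piecewise-defined $v$ genuinely belongs to $H^1(\Omega)$ — so that its norm splits as a sum over $\Th$ and its divergence carries no singular part on the edges. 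Both hold precisely because every $v_T$ vanishes on $\partial T$.
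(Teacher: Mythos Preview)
Your proof is correct and follows exactly the approach indicated in the paper, which simply states ``Summing up the result for every $T \in \mathcal{T}_h$ we can prove the following lemma'' without giving further details. You have supplied precisely those details: apply the preceding proposition on each triangle, glue the resulting bubble functions (which match trivially across edges since each $v_T$ vanishes on $\partial T$) into a global $v \in V_h^k$, and sum the local estimates.
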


\subsection{ Interpolating vertex values: Fundamental Vector Fields}
We will need to define vector fields that will help in interpolating pressure 
vertex values. To do this, we first need to define the following functions.

For every $z \in \Sh$  and $e \in \Eh(z)$ with $e=\{ z, y\}$ we define the two functions
\begin{alignat*}{1}
\eta_e^z= & \psi_z^2 \psi_y  \\
\gamma_e^z=&\eta_e^z- \frac{5}{2} \psi_z^2 \psi_y^2,
\end{alignat*}
where $\psi_z$ is defined in \eqref{eqn:weneedthis}.
Let $T_1$ and $T_2$ be the two triangles that have $e$ as an edge.
Then we can easily verify the following:
\begin{subequations}\label{eta}
\begin{alignat}{1}
& \text{ support } \eta_e^z  \subset T_1 \cup T_2 ,  \text{ support }  \gamma_e^z \subset T_1 \cup T_2,  \label{eta1} \\
& \nabla(\eta_e^z) (\sigma )=0= \nabla(\gamma_e^z) (\sigma ) \quad \text{ for  } \sigma \in \Sh  \text{ and }   \sigma \neq z, \label{eta2} \\
 &\int_e \gamma_e^z \, ds =0 \label{eta3}.
\end{alignat}
\end{subequations}

For example, to prove the last equation we used that
\begin{alignat}{1}
\int_e \psi_z^2 \psi_y \, ds=& \frac{|e|}{12} , \label{psi1} \\
\int_e \psi_z^2 \psi_y^2 \, ds=& \frac{|e|}{30}, \label{psi2}
\end{alignat}
which can be done by transforming the edge $e$ to the unit interval. 
 
The first vector field we define is
\begin{equation}\label{wei}
w_e^z=-|e| t_e^z \eta_e^z.
\end{equation}

We are going to need to consider $w_e^z$ and vector fields of the form $c \gamma_e^z$. 
The following lemmas collect properties of these functions. 

\begin{lemma}
Let $z \in \Sh$  and $e \in \Eh(z)$ with $e=\{ z, y\}$ and denote the two triangles that have $e$ as an  edge as $T_1$ and $T_2$. Let $v= c |e| \gamma_e^z$ where $c$ is a constant vector and let $w_e^z$ be given by \eqref{wei}. It  holds,

\begin{subequations}\label{w}
\begin{alignat}{1}
& w_e^z \in V_h^3,  \quad v \in V_h^4 \label{w1} \\
& \text{ support } w_e^z  \subset T_1 \cup T_2 ,  \text{ support } v  \subset T_1 \cup T_2 \label{w2} \\
& \int_K \dive w_e^z\, dx= 0 =\int_K \dive v \, dx  \quad \text{ for all } K  \in \Th, \label{w3}  \\
& \dive v(\sigma )=0=\dive w_e^z(\sigma ) \quad \text{ for } \sigma \in \Sh \text{ and } \sigma \neq z, \label{w4}\\
& \dive v|_{T_s} (z)=|e|\, c \cdot \nabla \psi_y|_{T_s}=  \frac{-|e|}{h_{T_s}^y}  c \cdot n_{T_s}^y  \quad \text{ for } s=1,2 \label{w5} \\  
& \dive w_{e}^z|_{T_s} (z)= 1 \quad \text{ for } s=1,2 \label{w6}\\
& \|\nabla w_e^z\|_{L^2(T_1 \cup T_2 )} \le  C h_z, \quad\hbox{and}\quad  \|\nabla v\|_{L^2(T_1 \cup T_2)}  \le C\, h_z  \, |c|. \label{w7} 
\end{alignat}
\end{subequations}
The constant $C$ only depends on the shape regularity.
\end{lemma}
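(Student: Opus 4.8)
The plan is to verify each of the seven claims in \eqref{w} directly from the explicit formulas for $w_e^z = -|e|\, t_e^z\, \eta_e^z$ and $v = c|e|\,\gamma_e^z$, together with the facts \eqref{eta} already recorded for the scalar functions $\eta_e^z$ and $\gamma_e^z$. The polynomial-degree claim \eqref{w1} is immediate: $\eta_e^z = \psi_z^2\psi_y$ is a product of three piecewise-linear hat functions, hence piecewise cubic, and $\gamma_e^z$ adds a piecewise-quartic term $\psi_z^2\psi_y^2$; multiplying by the constant vectors $t_e^z$ or $c$ changes nothing, and continuity plus vanishing on $\partial\Omega$ follow since the hat functions involved all vanish outside $T_1\cup T_2$ (the support claim \eqref{w2}) and in particular on $\dO$. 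The support statement \eqref{w2} is likewise inherited directly from \eqref{eta1}.

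For \eqref{w3} I would use the divergence theorem on each triangle $K$: $\int_K \dive(\text{field})\,dx = \int_{\partial K}(\text{field})\cdot n\,ds$. The integrand is a linear combination of the scalar edge-functions restricted to edges of $K$; the normal component of $t_e^z\eta_e^z$ on the boundary of a triangle having $e$ as an edge is supported only on $e$ where $t_e^z$ is tangent, so that contribution vanishes, and on the other two edges $\eta_e^z$ itself vanishes because one of $\psi_z,\psi_y$ is zero there. For $v=c|e|\gamma_e^z$, $\gamma_e^z$ again vanishes on the two edges of $K$ not equal to $e$, and on $e$ the integral $\int_e \gamma_e^z\,ds = 0$ by \eqref{eta3}, regardless of the direction of $c\cdot n$; so $\int_K \dive v\,dx = 0$ as well. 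For \eqref{w4} and \eqref{w6} I would compute pointwise: $\dive(t_e^z\eta_e^z) = t_e^z\cdot\nabla\eta_e^z$ and $\dive(c\gamma_e^z) = c\cdot\nabla\gamma_e^z$, then invoke \eqref{eta2} to kill the value at every vertex $\sigma\ne z$. At the vertex $z$ itself, using the product rule and $\psi_z(z)=1$, $\psi_y(z)=0$, one finds $\nabla\eta_e^z(z) = \nabla\psi_y(z)$ restricted to the relevant triangle, and $\nabla(\psi_z^2\psi_y^2)(z)=0$, so $\nabla\gamma_e^z(z) = \nabla\psi_y(z)$ as well; plugging these in and using $t_e^z\cdot\nabla\psi_y = -1/|e|$ from \eqref{tpsi} (valid on $e$, hence at $z$) gives \eqref{w6}, while \eqref{w5} follows from $\nabla\psi_y|_{T_s} = -\tfrac{1}{h_{T_s}^y}n_{T_s}^y$ from \eqref{nablapsi} together with the factor $c|e|$.

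Finally the stability bound \eqref{w7} is a scaling estimate: on each triangle in the support, $|\nabla w_e^z| \lesssim |e|\cdot|\nabla(\psi_z^2\psi_y)|$, and since the hat functions are $O(1)$ in $L^\infty$ with gradients $O(1/h_T)$, while $|e|$ and $h_T$ are comparable up to shape regularity and $\mathrm{area}(T_s) \lesssim h_z^2$, one gets $\|\nabla w_e^z\|_{L^2(T_1\cup T_2)} \lesssim |e|\cdot (1/h) \cdot h_z \approx C h_z$; the bound for $v$ picks up the extra factor $|c|$. I expect the only mildly delicate point to be bookkeeping the chain of shape-regularity equivalences ($|e| \approx h_{T_s} \approx h_z$, $h_{T_s}^y \approx h_z$) so that the final constant genuinely depends only on shape regularity; everything else is a routine application of the already-established identities \eqref{eta}, \eqref{tpsi}, \eqref{nablapsi}.
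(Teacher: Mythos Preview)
Your proposal is correct and follows essentially the same route as the paper: \eqref{w1}--\eqref{w2} from the definitions, \eqref{w3} via the divergence theorem together with $t_e^z\cdot n=0$ on $e$ and \eqref{eta3}, \eqref{w4}--\eqref{w6} by the product-rule computation at $z$ using $\psi_z(z)=1$, $\psi_y(z)=0$ and then \eqref{tpsi}, \eqref{nablapsi}, and \eqref{w7} by scaling. The only cosmetic difference is that the paper phrases \eqref{w7} through an inverse estimate $\|\nabla v\|_{L^\infty}\le C h_z^{-1}\|v\|_{L^\infty}$ rather than by bounding $|\nabla(\psi_z^2\psi_y)|$ directly, but the content is identical.
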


\begin{proof}
The results \eqref{w1} and \eqref{w2} follow directly from the definitions of $\eta_e^z$ and $\gamma_e^z$.  To prove \eqref{w3} note that it trivially holds for $K \in \Th$ if $K$ is not $T_1$ or $T_2$. Therefore, let $K=T_s$ (for $s=1, 2$) then, using  \eqref{w2}  and integration by parts we get
\begin{equation*}
 \int_K \dive v \, dx = \int_{e}  v \cdot n \, ds
\end{equation*}
where $n$ is the unit vector normal to $e$ pointing out of $K$.
Therefore,  by \eqref{eta3} 
\begin{equation*}
 \int_K \dive v \, dx  = |e| \cdot (c \cdot n) \int_{e} \gamma_e^z \, ds=0.
\end{equation*}
Since $t_e^z \cdot n =0$ we can also show
\begin{equation*}
 \int_K \dive w_e^z\, dx= 0.
\end{equation*}
The equations \eqref{w4} follow from \eqref{eta2}. 

To prove \eqref{w5}, we use that $\psi_y(z)=0$  and $\psi_z(z)=1$ to get 
\begin{equation*}
\begin{split}
\dive v|_{T_s}(z) &= |e| \psi_z^2(z) \big(1- 5 \psi_y(z)\big) c \cdot\nabla\psi_y|_{T_s} 
+|e|2 \psi_z(z)\big(\psi_y(z)-\frac{5}{2} \psi_y(z)^2\big) c \cdot\nabla\psi_z|_{T_s} \\
&=|e| \, c \cdot \nabla  \psi_y|_{T_s}.
\end{split}
\end{equation*}
The result follows from \eqref{nablapsi}.
Similarly, 
\begin{equation*}
 \dive w_e^z |_{T_s} (z)= -|e| t_e^z \cdot \nabla \psi_y|_{T_s}  =-|e| t_e^z \cdot  \nabla \psi_y|_{e} = 1,  
\end{equation*}
where we used that $\psi_y$ is continuous along $e$ and therefore $ t_e^z \cdot \nabla \psi_y|_{T_1}= t_e^z \nabla  \cdot \psi_y|_{T_2}$. Finally, we used \eqref{tpsi}. 

To prove \eqref{w7}, we use the Cauchy-Schwarz inequality, and an inverse estimate
\begin{equation*}
\|\nabla v\|_{L^2(T_1 \cup T_2)} \le C \, h_z \|\nabla v\|_{L^\infty(T_1 \cup T_2)}  \le C \, \|v\|_{L^\infty(T_1 \cup T_2)}  \le C \, |c| \, |e|\,  \, \|\gamma_e^z\|_{L^\infty(T_1 \cup T_2)}  \le  C \, h_z\, |c|.
\end{equation*}
Here we used the shape regularity of the mesh.  The bound for $w_e^z$ is similar.
\end{proof}

We define the other fundamental vector field in the following lemma.
\begin{lemma}\label{lemmavz}
For every $z \in \Sh^1$ and $T \in \Th(z)$ there exists a $v_T^z \in V_h^4$ with the following properties. 
\begin{subequations}\label{lemma1vertex}
\begin{alignat}{1}
&\dive v_T^z (\sigma)= 0 \quad \text{ for all } \sigma \neq z,  \label{lemma1vertex1} \\
& \dive v_T^z|_T(z)=1, \quad \text{ and }  \dive v_T^z|_K (z)=0  \quad \text{ for all } K \in \Th(z) \text{ and } K \neq T \label{lemma1vertex2} \\
& \text{ support } v_T^z \subset  \Omega_h(z),  \label{lemma1vertex3} \\
& \int_K \dive v_T^z \, dx=0 \quad \text{ for all } K \in \Th. \label{lemma1vertex4}
\end{alignat}
\end{subequations}

The following bound holds
\begin{equation}\label{lemma1vertex5}
 \|\nabla v_T^z \|_{L^2(\Omega_h(z))} \le C \, h_z\Big(\frac{1}{\Theta(z)}+1\Big).
\end{equation}
The constant $C$ is independent of  $h$ and only depends on the shape regularity and $k$.
 
\end{lemma}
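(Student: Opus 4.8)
The plan is to build $v_T^z$ out of the two families of fundamental vector fields already at our disposal: the fields $w_e^z$ from \eqref{wei}, and fields of the form $c\,|e|\,\gamma_e^z$ with $c$ a constant vector. Enumerate $\Th(z)=\{T_1,\dots,T_N\}$ as in Section~\ref{preliminaries} and let $e_1,\dots,e_M$ be the \emph{interior} edges at $z$, numbered so that $e_j$ is the common edge of $T_j$ and $T_{j+1}$ (with $M=N$ and indices read cyclically when $z$ is interior, and $M=N-1$ when $z$ is a boundary vertex); write $e_j=\{z,y_j\}$. Since $z$ is non-singular we have $\Theta(z)>0$: if $\Theta(z)$ were $0$ then, as $\theta_j+\theta_{j+1}\in(0,2\pi)$, each $\sin(\theta_j+\theta_{j+1})=0$ would force every angle sum to equal $\pi$, hence $\Gamma(z)=0$. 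Accordingly fix an interior edge $e_{j^\star}$ realising $\Theta(z)=|\sin(\theta_{j^\star}+\theta_{j^\star+1})|$. We seek $v_T^z$ in the form
\begin{equation*}
v_T^z=\sum_{j=1}^{M} a_j\, w_{e_j}^z+c\,|e_{j^\star}|\,\gamma_{e_{j^\star}}^z ,
\end{equation*}
for suitable scalars $a_j$ and a constant vector $c$. With this ansatz, \eqref{lemma1vertex1}, \eqref{lemma1vertex3} and \eqref{lemma1vertex4} hold termwise by \eqref{w4}, \eqref{w2} and \eqref{w3}, and $v_T^z\in V_h^4$ by \eqref{w1}; the only substantive requirement is the vertex condition \eqref{lemma1vertex2}, which we now turn into a linear system for $(a_j,c)$.

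By \eqref{w6}, each $w_{e_j}^z$ contributes exactly $1$ to $\dive v_T^z|_{T_i}(z)$ when $i\in\{j,j+1\}$ and nothing otherwise, so the $w$–part of the vector $\big(\dive v_T^z|_{T_1}(z),\dots,\dive v_T^z|_{T_N}(z)\big)\in\mathbb{R}^N$ is obtained from $(a_j)$ by the ``neighbour–sum'' map; by \eqref{w5} the term $c\,|e_{j^\star}|\,\gamma_{e_{j^\star}}^z$ affects only the $T_{j^\star}$– and $T_{j^\star+1}$–components, with values $|e_{j^\star}|\,c\cdot\nabla\psi_{y_{j^\star}}|_{T_{j^\star}}$ and $|e_{j^\star}|\,c\cdot\nabla\psi_{y_{j^\star}}|_{T_{j^\star+1}}$. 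Condition \eqref{lemma1vertex2} asks this $\mathbb{R}^N$–vector to equal the standard basis vector with its $1$ in the slot $i_0$ for which $T=T_{i_0}$. The neighbour–sum map is, up to relabelling, the circulant $I+S$ (interior vertex) or a bidiagonal matrix (boundary vertex); when $z$ is interior with $N$ odd it is invertible, so one simply solves for $(a_j)$ with $|a_j|\le C$ and takes $c=0$. In every other case its range is the codimension–one subspace $\{x\in\mathbb{R}^N: x_1-x_2+x_3-\cdots\pm x_N=0\}$ cut out by the alternating vector — no accident, as this is essentially the functional that defines $A_h^z$, so by Lemma~\ref{lemmasingular} the target vector would be unreachable if $z$ were singular. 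The key point is that the $\gamma$-term supplies the one missing direction: $\nabla\psi_{y_{j^\star}}|_{T_{j^\star}}$ and $\nabla\psi_{y_{j^\star}}|_{T_{j^\star+1}}$ share the same tangential component along $e_{j^\star}$ by \eqref{tpsi}, so their difference is normal to $e_{j^\star}$, and a short computation from \eqref{nablapsi}–\eqref{hTj} identifies that difference as $|e_{j^\star}|^{-1}\,\sin(\theta_{j^\star}+\theta_{j^\star+1})\,(\sin\theta_{j^\star}\sin\theta_{j^\star+1})^{-1}$ times the unit normal. Choosing $c$ along that normal with $|c|\le C/\Theta(z)$ (which is where the non-degeneracy of $e_{j^\star}$ enters) makes the alternating component of the $\gamma$-term's divergence vector equal to any prescribed $O(1)$ quantity, in particular to the alternating component of the target; then the target minus the $\gamma$-contribution lies in the range of the neighbour–sum map and is realised by $(a_j)$ with $|a_j|\le C(1+|c|)\le C/\Theta(z)$.

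It remains to establish \eqref{lemma1vertex5}. The number of triangles and of interior edges meeting $z$ is bounded in terms of the shape regularity, so, using \eqref{w7} and $|a_j|\le C/\Theta(z)$,
\begin{equation*}
\Big\|\nabla\!\!\sum_{j=1}^{M} a_j\, w_{e_j}^z\Big\|_{L^2(\Omega_h(z))}\le C\,h_z\,\frac{1}{\Theta(z)},
\end{equation*}
while \eqref{w7} applied to the correction gives $\big\|\nabla\big(c\,|e_{j^\star}|\,\gamma_{e_{j^\star}}^z\big)\big\|_{L^2(\Omega_h(z))}\le C\,h_z\,|c|\le C\,h_z/\Theta(z)$. (When $z$ is interior with $N$ odd these estimates hold with $1/\Theta(z)$ replaced by $1$.) Adding the two contributions yields $\|\nabla v_T^z\|_{L^2(\Omega_h(z))}\le C\,h_z\big(1/\Theta(z)+1\big)$.

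The main obstacle is the middle step: correctly identifying the range of the neighbour–sum map in each geometric configuration (interior versus boundary vertex, and the parity of $N$), and checking that a single $\gamma$-correction placed on the \emph{best} edge $e_{j^\star}$ spans exactly the missing alternating direction — and, most delicately, that the vector $c$, and hence the final $H^1$–bound, degrades no faster than $1/\Theta(z)$. A minor additional point is to verify that the enumeration and the claim $\Theta(z)>0$ go through for boundary vertices, and to dispose of the trivial case of a boundary vertex with a single triangle (which is singular).
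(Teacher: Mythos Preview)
Your approach is correct and is, at bottom, the same construction as the paper's: one $\gamma$-type correction placed on the edge $e_{j^\star}$ realising $\Theta(z)$, together with a combination of the $w_{e_j}^z$ fields, with the crucial observation that the $\gamma$-term supplies precisely the alternating direction missing from the span of the $w$'s, at a cost of $1/\Theta(z)$.

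The difference is purely in presentation. You set up an abstract linear system, identify the cokernel of the neighbour-sum map (with a case split on interior/boundary and on the parity of $N$), choose $c$ normal to $e_{j^\star}$, and then back-solve for the $a_j$. The paper instead makes the explicit choice $c=\dfrac{\sin\theta_{j^\star}}{\Theta(z)}\,t_{j^\star+1}$, i.e.\ $c$ tangent to the \emph{neighbouring} edge $e_{j^\star+1}$; by \eqref{w5} this kills the $T_{j^\star+1}$-contribution outright and gives $\dive v_{T_{j^\star}}^z|_{T_{j^\star}}(z)=1$ directly, so that $v_{T_{j^\star}}^z$ itself already satisfies \eqref{lemma1vertex2}. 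For the remaining triangles the paper writes $v_{T_{j^\star\pm\ell}}^z$ as explicit alternating telescopes $\pm(v_{T_{j^\star}}^z\mp w_{e_{j^\star}}^z\pm\cdots)$, each $w$-coefficient being $\pm1$. This sidesteps your parity/boundary case analysis and keeps the $a_j$ of order $1$ (rather than $1/\Theta(z)$), though the final bound \eqref{lemma1vertex5} is identical either way. Your linear-algebra framing has the virtue of making transparent \emph{why} non-singularity is exactly what is needed; the paper's explicit formula is shorter.
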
 
\begin{proof}
We adopt the notation  given in the preliminary section which we recall here. 
For $z \in \Sh$ we enumerate the triangles that have $z$ as a vertex: $\Th(z)=\{ T_1, T_2, \ldots T_{N} \}$. 
If $z$ is a boundary vertex then we enumerate the triangles such that $T_1$ and  $T_N$ each have a boundary edge.  Moreover, we enumerate them so that $T_j, T_{j+1}$ share an edge  $e_j$ ,  for $j=1, \ldots N-1$ and  $T_{N}$ and $T_1$  share an edge $ e_{N}$ in the case $z$ is an interior vertex. Let $\theta_j $ denote the angle between the edges of $T_j$ originating from $z$.    We let $1 \le s \le N-1$ be such that $ |\sin(\theta_s+ \theta_{s+1})|=\Theta(z)$. 
Without loss of generality we can assume that $s \neq N$; this is immediate
for a boundary vertex, and for an interior vertex
 we can enumerate the triangles accordingly.
Let  $e_s=\{ z, y\}$, then recall that $n_{T_s}^y$ and $n_{T_{s+1}}^y$ are the unit normal vectors pointing out of $T_{s}$, $T_{s+1}$, respectively,  at the edges opposite to $y$. 
Let $t_{s+1}$ be the tangent vector to $e_{s+1}$ pointing 
away from  $z$ orthogonal $n_{T_{s+1}}^y$ (i.e.\, $ t_{s+1} \cdot n_{T_{s+1}}^y=0$). See Figure \ref{triangle1} for an illustration.

\begin{figure}
\vspace{-100pt}
\centerline{\qquad\qquad\includegraphics[scale=1.0]{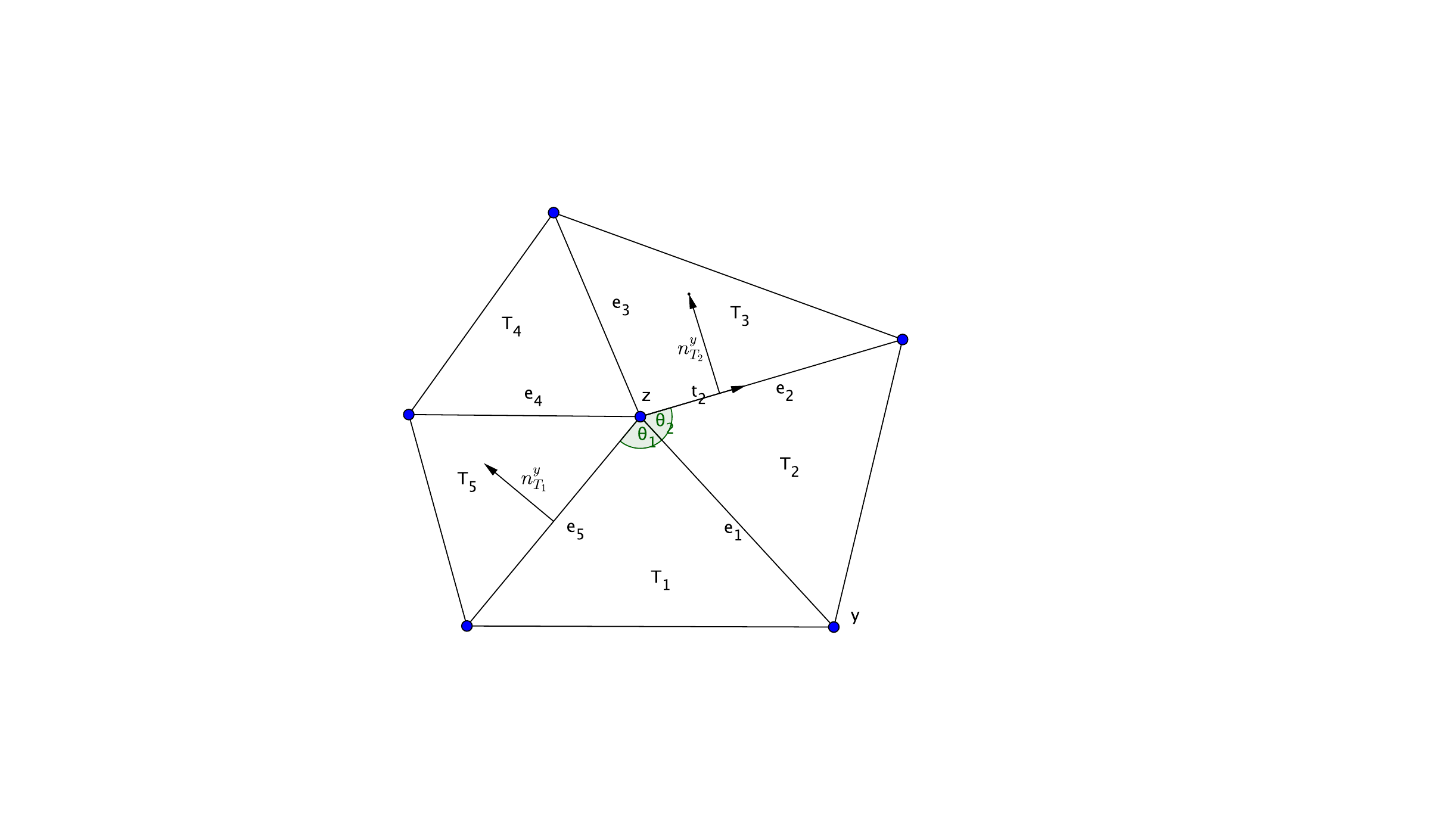}}
\footnotesize
\ra{1.1}
\vspace{-100pt}
\caption{Example, on non-singular vertex $z$, $N=5$, $s=1$.}
\label{triangle1}
\end{figure}

 We need to define $v_{T_j}^z$ for $1 \le j \le N$. We start by defining

\begin{equation*}
v_{T_{s}}^z= \frac{ |e| \, \sin(\theta_s)}{ \Theta(z)}   t_{s+1} \eta_{e_s}^z .
\end{equation*}

Then, for every $1 \le \ell \le N-s $ we define
\begin{equation*}
v_{T_{s+\ell}}^z= (-1)^ {\ell-1} \big(v_{T_{s}}^z- w_{e_{s}}^z+w_{e_{s+1}}^z+  \cdots+ (-1)^{\ell-1} w_{e_{s+ \ell-1}}^z\big).    
\end{equation*}

Also, for   $1 \le  \ell \le s-1$
\begin{equation*}
v_{T_{s-\ell}}^z= (-1)^ {\ell-1} \big(v_{T_{s}}^z- w_{e_{s-1}}^z+w_{e_{s-2}}^z+  \cdots+ (-1)^{\ell-1} w_{e_{s-(\ell -1)}}^z\big).    
\end{equation*}

With these definitions, \eqref{lemma1vertex1}, \eqref{lemma1vertex3} and \eqref{lemma1vertex4} clearly follow from \eqref{w4},\eqref{w2} and \eqref{w3},  respectively.  We are left to prove \eqref{lemma1vertex2} and \eqref{lemma1vertex5}.  Let us first prove this for $T=T_{s}$. By the definition of $\eta_{e_s}^z$  it has support in $T_{s} \cup T_{s+1}$ and, therefore, we only need to consider $K=T_{s}$ and $K={T_{s+1}}$. First, by \eqref{w5}
\begin{equation}\label{eqsp1_1}
\dive v_{T_{s}}^z|_{T_{s+1}}(z)=-\frac{ \sin(\theta_{s})}{ \Theta(z)}   \frac{|e|}{h_{T_{s+1}}^y}  t_{s+1}  \cdot n_{T_{s+1}}^{y}=0.
\end{equation}
On the other hand,
\begin{equation}\label{eqsp1_2}
\dive v_{T_{s}}^z|_{T_{s}}(z)= -\frac{\, \sin(\theta_{s})}{ \Theta(z)}   \frac{|e|}{h_{T_{s}}^y}  t_{s+1}  \cdot n_{T_{s}}^{y} =- \frac{ t_{s+1}  \cdot n_{T_{s}}^{y}}{\Theta(z)}=1,
\end{equation}
where we used that $ t_{s+1}  \cdot n_{T_{s}}^{y}= \cos(\theta_s+\theta_{s+1}+\frac{\pi}{2})=-\sin(\theta_s+ \theta_{s+1})=-\Theta(z)$. The inequality  \eqref{lemma1vertex5} follows from \eqref{w7} to get 
\begin{equation}\label{boundsp1}
\|\nabla v_{T_{s}}^z\|_{L^2(\Omega_h(z))} \le   \frac{ C |e_{s}|} {\Theta(z)} \le \frac{ C h_z}{\Theta(z)}.
\end{equation}

For $T=T_{s+\ell}$, $1 \le \ell \le N-s $ and $T=T_{s-\ell}$ for $1 \le  \ell \le s-1$  we can use \eqref{eqsp1_1}, \eqref{eqsp1_2} and \eqref{w6} to prove \eqref{lemma1vertex2}. The bound \eqref{lemma1vertex5} follows from \eqref{boundsp1} and \eqref{w7}, using $N$ is bounded depending only on the shape regularity.  
\end{proof}

Using these vector fields we can prove a crucial lemma.  

\begin{lemma}\label{lemma0}
For every $p \in Q_h^{k-1}$ and $z \in \Sh$ there exists  a $v$ such that
the following properties hold:
\begin{subequations}\label{lemmavertex}
\begin{alignat}{1}
&\dive v (\sigma)= 0 \quad \text{ for all } \sigma \neq z,  \label{lemmavertex1} \\
& \dive v|_T (z)= p|_T(z) \quad \text{ for all } T \in \Th(z), \label{lemmavertex2} \\
& \text{ support } v \subset  \Omega_h(z),  \label{lemmavertex3} \\
& \int_K \dive v \, dx=0 \quad \text{ for all } K \in \Th. \label{lemmavertex4}
\end{alignat}
\end{subequations}
If $z$ is a singular vertex then $v \in V_h^3$ 
with the bound
\begin{equation}\label{lemmavertex5}
 \|\nabla v \|_{L^2(\Omega_h(z))} \le C \|p\|_{L^2(\Omega_h(z))}.
\end{equation}
If $z$ is a non-singular vertex then $v \in V_h^4$ 
with the bound
\begin{equation}\label{lemmavertex6}
 \|\nabla v \|_{L^2(\Omega_h(z))} \le \frac{C}{\Theta(z)+1} \|p\|_{L^2(\Omega_h(z))}.
\end{equation}
The constant $C$ is independent of $p$ and  $h$ and only depends on the shape regularity and $k$.

\end{lemma}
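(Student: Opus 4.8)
The plan is to treat singular and non-singular vertices $z$ separately, and in both cases to produce $v$ as an explicit linear combination of the fundamental vector fields constructed above, whose coefficients are read off directly from the vertex values $p|_T(z)$, $T\in\Th(z)$. One elementary estimate is used throughout: since each $p|_T\in P^{k-1}(T)$ and the mesh is shape regular, norm equivalence on $P^{k-1}$ of the reference triangle combined with a scaling argument gives $|p|_T(z)|\le\|p\|_{L^\infty(T)}\le C h_z^{-1}\|p\|_{L^2(T)}$, and hence, since $\#\Th(z)$ is bounded in terms of the shape regularity, $\sum_{T\in\Th(z)}|p|_T(z)|\le C h_z^{-1}\|p\|_{L^2(\Omega_h(z))}$. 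In either case the properties \eqref{lemmavertex1}, \eqref{lemmavertex3} and \eqref{lemmavertex4} will be inherited termwise from the corresponding properties of the building blocks, so the real work lies in matching the vertex values \eqref{lemmavertex2} and in the norm bound.

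For $z\in\Sh^2$ I would take $v=\sum_{e\in\Eh(z)}c_e\,w_e^z$. By \eqref{w1} this lies in $V_h^3$, by \eqref{w2} it is supported in $\Omega_h(z)$, and \eqref{lemmavertex1}, \eqref{lemmavertex4} follow from \eqref{w4}, \eqref{w3}. By \eqref{w4} and \eqref{w6}, $\dive w_e^z|_T(z)$ equals $1$ if $e$ is an edge of $T$ and $0$ otherwise, so \eqref{lemmavertex2} becomes a small linear system: for each $T\in\Th(z)$ the sum of $c_e$ over the edges of $T$ lying in $\Eh(z)$ equals $p|_T(z)$ (two such edges in general, a single one when $T$ has a boundary edge at $z$). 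With the enumeration of $\Th(z)$ this system is triangular for a boundary vertex and cyclic for an interior vertex, and in both cases its single compatibility condition for solvability is exactly $A_h^z(p)=0$, which holds because $p\in Q_h^{k-1}$ and $z\in\Sh^2$. Choosing a solution with $|c_e|\le C\sum_{T\in\Th(z)}|p|_T(z)|$ and using \eqref{w7} then gives $\|\nabla v\|_{L^2(\Omega_h(z))}\le C h_z\sum_e|c_e|\le C\|p\|_{L^2(\Omega_h(z))}$, which is \eqref{lemmavertex5}.

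For $z\in\Sh^1$ I would use the fields $v_T^z$ of Lemma \ref{lemmavz} and set $v=\sum_{T\in\Th(z)}p|_T(z)\,v_T^z\in V_h^4$. Then \eqref{lemmavertex1}, \eqref{lemmavertex3}, \eqref{lemmavertex4} come directly from \eqref{lemma1vertex1}, \eqref{lemma1vertex3}, \eqref{lemma1vertex4}, while \eqref{lemmavertex2} holds because $\dive v_{T'}^z|_T(z)=\delta_{T,T'}$ for $T,T'\in\Th(z)$ by \eqref{lemma1vertex2}, so only the term $T'=T$ contributes. For the norm, the triangle inequality together with \eqref{lemma1vertex5} and the estimate of the first paragraph gives $\|\nabla v\|_{L^2(\Omega_h(z))}\le C(\Theta(z)^{-1}+1)\|p\|_{L^2(\Omega_h(z))}$; sharpening this to \eqref{lemmavertex6} requires exploiting the internal structure of Lemma \ref{lemmavz}: every $v_T^z$ is, up to sign, the single costly field $v_{T_s}^z$ (built on the edge realizing $\Theta(z)$, with $\|\nabla v_{T_s}^z\|_{L^2(\Omega_h(z))}\le C h_z/\Theta(z)$) plus a chain of the inexpensive fields $w_e^z$ (with $\|\nabla w_e^z\|_{L^2}\le C h_z$). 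Collecting like terms, $v=\Lambda\,v_{T_s}^z+(\text{a combination of the }w_e^z)$, where the $w$-coefficients are bounded by $C\sum_{T\in\Th(z)}|p|_T(z)|$ and $\Lambda$ is one fixed signed sum of the values $p|_T(z)$. Estimating the $w$-part by \eqref{w7} and the first paragraph, the bound \eqref{lemmavertex6} would follow from showing that $\Lambda$ carries a compensating factor $\Theta(z)$, i.e.\ $|\Lambda|\le C\,\Theta(z)\,h_z^{-1}\|p\|_{L^2(\Omega_h(z))}$.

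The step I expect to be the main obstacle is precisely this estimate on $\Lambda$ in the non-singular case. The fields $v_T^z$ individually degenerate like $1/\Theta(z)$ as $z$ approaches a singular configuration, so the bound cannot be read off term by term; one must use that all of them are assembled from the \emph{same} costly field $v_{T_s}^z$ together with inexpensive $w_e^z$'s, so that in the sum only the single coefficient $\Lambda$ multiplies a term of size $h_z/\Theta(z)$, and then show that this particular signed combination of the vertex values of $p\in Q_h^{k-1}$ at a near-singular vertex carries the compensating factor $\Theta(z)$ — which is where the definition of $Q_h^{k-1}$, and in particular the role of $A_h^z$, has to be combined with the near-singular geometry. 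By contrast, the singular case is routine once the compatibility condition $A_h^z(p)=0$ is invoked.
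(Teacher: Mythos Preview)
Your treatment of the singular case and your first pass at the non-singular case are both correct and match the paper's proof essentially line for line: the paper also takes $v=\sum_j b_j w_{e_j}^z$ with $b_j=\sum_{\ell\le j}(-1)^{j-\ell}a_\ell$ at a singular vertex, and $v=\sum_{T\in\Th(z)} p|_T(z)\,v_T^z$ at a non-singular vertex, obtaining exactly the bound $\|\nabla v\|_{L^2(\Omega_h(z))}\le C\bigl(\Theta(z)^{-1}+1\bigr)\|p\|_{L^2(\Omega_h(z))}$ that you derive.

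The gap is in your third paragraph, and it is not a gap in your argument but rather a misreading induced by a typo in the statement. The expression $\tfrac{C}{\Theta(z)+1}$ in \eqref{lemmavertex6} should read $C\bigl(\tfrac{1}{\Theta(z)}+1\bigr)$. Three pieces of evidence: (i) the paper's own proof stops at the bound $C h_z\bigl(\tfrac{1}{\Theta(z)}+1\bigr)\|p\|_{L^\infty(\Omega_h(z))}$ and then invokes an inverse estimate, which gives $C\bigl(\tfrac{1}{\Theta(z)}+1\bigr)\|p\|_{L^2}$, not the printed form; (ii) the global Lemma \ref{lemma2} and the main theorem both carry the constant $\tfrac{1}{\tmin}+1$, which is what one gets by summing the corrected local bound; (iii) since $\Theta(z)\in(0,1]$, the literal printed quantity lies in $[C/2,C]$ and would be meaningless as a $\Theta(z)$-dependent bound.

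Your proposed sharpening via the estimate $|\Lambda|\le C\,\Theta(z)\,h_z^{-1}\|p\|_{L^2(\Omega_h(z))}$ is therefore unnecessary, and in fact it cannot be proved: at a non-singular vertex $z$, membership in $Q_h^{k-1}$ imposes \emph{no} constraint on the values $p|_T(z)$, so the signed sum $\Lambda$ can be prescribed arbitrarily (of order $h_z^{-1}\|p\|_{L^2(\Omega_h(z))}$) independently of how small $\Theta(z)$ is. The compensation you are looking for does not exist. Simply delete the last paragraph of your proposal; what remains is the paper's proof.
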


\begin{proof}
We adopt the notation from  the proof of the previous lemma.  We set $a_j=p|_{T_j}(z)$. First, suppose that $z$ is a singular vertex.  Then, we know by the definition of $Q_h^{k-1}$
\begin{equation}\label{altsum}
\sum_{j=1}^{N} (-1)^{N-j} a_j=0.
\end{equation}

We define 
\begin{equation*}
v=\sum_{j=1}^{N-1}  b_j w_{e_j}^z
\end{equation*}
where  we set
\begin{equation*}
b_{j}=\bigg(\sum_{\ell=1}^{j} (-1)^{j-\ell} a_\ell\bigg) \text{ for all } 1\le j\le N-1.
\end{equation*}
We immediately see that \eqref{lemmavertex1} , \eqref{lemmavertex3} and \eqref{lemmavertex4} from \eqref{w4}, \eqref{w2}, \eqref{w3}, respectively. Moreover, \eqref{w1} gives that $v \in V_h^3$.

Using \eqref{w6} we have 
\begin{equation*}
\dive v|_{T_j}(z)= b_{j-1}+  b_{j}= a_j \quad \text{ for } 2 \le j \le N-1. 
\end{equation*}
For $j =1$ we have 
\begin{equation*}
\dive v|_{T_1}(z)=  b_{1}= a_1. 
\end{equation*}
Also, 
\begin{equation*}
\dive v|_{T_{N}}(z)=  b_{N-1} =  \sum_{\ell=1}^{N-1} (-1)^{N-1-\ell} a_\ell= a_{N},
\end{equation*}
where we used \eqref{altsum}.  Therefore, we have shown \eqref{lemmavertex2}.
We see from \eqref{w7} that 
\begin{equation*}
\|\nabla v\|_{L^2(\Omega_h(z))} \le C \, h_z\bigg(\sum_{j=1}^{N-1} b_j^2\bigg)^{1/2}
\le C \, h_z \sum_{j=1}^{N-1} |b_j| \le C h_z N \sum_{j=1}^{N} |a_j| 
\le C h_z N \sum_{j=1}^{N} \|p\|_{L^\infty(T_j)},
\end{equation*}
where the equivalence of norms uses
that $N$ is bounded depending only on the shape regularity.  
This bound on $N$ also implies
inequality \eqref{lemmavertex5}  after applying the inverse estimate
\begin{equation*}
\|p\|_{L^\infty(T_j)} \le \frac{C}{h_{T_j}} \|p\|_{L^2(T_j)}.
\end{equation*}

Next, we assume $z$ is a non-singular singular vertex.  In this case, we define
\begin{equation*}
v=\sum_{j=1}^{N} a_j v_{T_j}^z.  
\end{equation*}

Clearly, \eqref{lemmavertex1},  \eqref{lemmavertex2},  \eqref{lemmavertex3},  \eqref{lemmavertex4} follow from \eqref{lemma1vertex1},  \eqref{lemma1vertex2},  \eqref{lemma1vertex3},  \eqref{lemma1vertex4}. Using  \eqref{lemma1vertex5} we get
\begin{equation*}
\|\nabla v\|_{L^2(\Omega_h(z))} \le  C \, h_z \Big(\frac{1}{\Theta(z)}+1\Big) \|p\|_{L^\infty(\Omega_h(z))}. 
\end{equation*}
 The inequality \eqref{lemmavertex6} follows after applying inverse estimates. 

\end{proof}

We can use the previous lemma to prove a global result. First, we  define 
 \begin{equation*}
 \tmin= \min_{z \in \Sh^1} \Theta(z).
 \end{equation*}

\begin{lemma} \label{lemma2}
For every $p \in Q_h^{k-1}$ there exists $v \in V_h^4$
\begin{subequations}\label{globalvertex}
\begin{alignat}{1}
& (\dive v-p)(z)=0 \quad \text{ for all } z \in \Sh, \label{globalvertex1} \\
& \int_K \dive v \, dx=0 \quad \text{ for all } K \in \Th \label{globalvertex2} 
\end{alignat}
\end{subequations}
and
\begin{equation}\label{globalvertex3}
 \|\nabla v \|_{L^2(\Omega)} \le C_1\Big(\frac{1}{\tmin}+1\Big) \|p\|_{L^2(\Omega)},
\end{equation}
where the constant $C_1$ is independent of $p$ and depends only on the shape regularity of the meshes and $k$. 
\end{lemma}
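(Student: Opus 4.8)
The idea is to localize the construction to vertices via Lemma~\ref{lemma0} and then superpose. For each $z\in\Sh$ let $v_z$ be the vector field produced by Lemma~\ref{lemma0} applied to $p$ at the vertex $z$. Recall that $v_z\in V_h^4$ in every case (if $z$ is singular then $v_z\in V_h^3\subset V_h^4$), that $v_z$ is supported in $\Omega_h(z)$ by \eqref{lemmavertex3}, that $\int_K\dive v_z\,dx=0$ for every $K\in\Th$ by \eqref{lemmavertex4}, that $\dive v_z|_T(z)=p|_T(z)$ for all $T\in\Th(z)$ by \eqref{lemmavertex2}, and that $\dive v_z|_T(\sigma)=0$ for every triangle $T$ and every vertex $\sigma\neq z$ by \eqref{lemmavertex1}. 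I then set
\begin{equation*}
v=\sum_{z\in\Sh}v_z\in V_h^4 ,
\end{equation*}
a finite sum since $\Sh$ is finite; clearly $v\in[C_0(\Omega)]^2$ and is piecewise $[P^4]^2$.

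Next I would verify \eqref{globalvertex1} and \eqref{globalvertex2}, the point being that the vertex-by-vertex corrections do not interfere. Fix $w\in\Sh$ and $T\in\Th(w)$. In the sum $\dive v|_T(w)=\sum_{z\in\Sh}\dive v_z|_T(w)$ every term with $z\neq w$ vanishes: either $T\not\subset\Omega_h(z)$, so $v_z\equiv 0$ on $T$, or $w$ is a vertex of $T$ different from $z$, and then \eqref{lemmavertex1} gives $\dive v_z|_T(w)=0$. Hence $\dive v|_T(w)=\dive v_w|_T(w)=p|_T(w)$, which is \eqref{globalvertex1}. Likewise, for every $K\in\Th$, $\int_K\dive v\,dx=\sum_{z\in\Sh}\int_K\dive v_z\,dx=0$ by \eqref{lemmavertex4}, so \eqref{globalvertex2} holds.

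For the stability bound I would avoid the crude triangle inequality (which would produce a factor $\#\Sh$) and instead use finite overlap. For $z\in\Sh^1$ one has $\Theta(z)\ge\tmin$, so \eqref{lemmavertex6} together with the trivial estimate \eqref{lemmavertex5} for $z\in\Sh^2$ gives, for every $z\in\Sh$,
\begin{equation*}
\|\nabla v_z\|_{L^2(\Omega_h(z))}\le C\Big(\frac{1}{\tmin}+1\Big)\|p\|_{L^2(\Omega_h(z))}.
\end{equation*}
The patches $\{\Omega_h(z)\}_{z\in\Sh}$ have overlap at most $3$, since a given triangle $K$ lies in $\Omega_h(z)$ for exactly the three vertices $z$ of $K$; hence at most three of the fields $\nabla v_z$ are simultaneously nonzero at any point of $\Omega$, and also $\sum_{z\in\Sh}\|p\|_{L^2(\Omega_h(z))}^2=3\|p\|_{L^2(\Omega)}^2$. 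Applying Cauchy--Schwarz pointwise and then these two facts,
\begin{equation*}
\|\nabla v\|_{L^2(\Omega)}^2\le 3\sum_{z\in\Sh}\|\nabla v_z\|_{L^2(\Omega_h(z))}^2\le 3C^2\Big(\frac{1}{\tmin}+1\Big)^2\sum_{z\in\Sh}\|p\|_{L^2(\Omega_h(z))}^2\le 9C^2\Big(\frac{1}{\tmin}+1\Big)^2\|p\|_{L^2(\Omega)}^2 ,
\end{equation*}
which is \eqref{globalvertex3} with $C_1=3C$.

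The argument is essentially bookkeeping once Lemma~\ref{lemma0} is available; the only steps needing care are that the superposition preserves both the vertex interpolation and the zero-mean-on-each-triangle property, which hold precisely because each $\dive v_z$ was engineered to vanish at all vertices other than $z$ and to have zero average on every triangle, and that the $1/\tmin$ in the final constant originates entirely from the non-singular vertices, where $\Theta(z)$ may be small. There is no serious obstacle.
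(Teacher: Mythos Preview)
Your proof is correct and follows essentially the same approach as the paper: define $v=\sum_{z\in\Sh}v_z$ with $v_z$ from Lemma~\ref{lemma0}, verify \eqref{globalvertex1}--\eqref{globalvertex2} by locality, and obtain \eqref{globalvertex3} via the fact that at most three $v_z$ are nonzero on any triangle. Your write-up is in fact a bit more careful than the paper's (you make the Cauchy--Schwarz/finite-overlap step explicit and verify \eqref{globalvertex2} directly), but the argument is the same.
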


\begin{proof}
Let $p \in Q_h^{k-1}$ be given. Given $z \in \Sh$ let $v_z$ denote  the vector field satisfying the properties of the previous lemma.  
Then, we set $v= \sum_{z \in \Sh} v_z$.  Clearly, from the previous lemma, \eqref{globalvertex1} holds. 
Finally, since only three $v_z$'s are non-zero on each given triangle $T$ we can easily show that 
\begin{equation*}
\begin{split}
\|\nabla v\|_{L^2(\Omega)}^2 &= \sum_{T\in\Th}\|\nabla v\|_{L^2(T)}^2 
 = \sum_{T\in\Th}\sum_{z \in T}\|\nabla v_z\|_{L^2(T)}^2 
 = \sum_{z \in \Sh} \| \nabla v_z \|_{L^2(\Omega_h(z))}^2 \\
&\le \frac{C}{\Theta_{\min}^{\,2}}  \sum_{  z \in \Sh^1} \|p\|_{L^2(\Omega_h(z))}^2+ C   \sum_{  z \in \Sh \backslash \Sh^1} \|p\|_{L^2(\Omega_h(z))}^2.
\end{split}
\end{equation*}

The inequality \eqref{globalvertex3} now easily follows.

\end{proof}

\subsection{ The Final Step}
We can now combine all the above results to prove the inf-sup condition. 
\begin{theorem}\label{mainthm4}
Suppose that our family of meshes $\{\Th\}_h$  is non-degenerate (shape regular). 
Then,  $Q_h^{k-1}, V_h^k $ satisfy the inf-sup condition \eqref{inf-sup} for $k \ge 4$ where the constant $\beta$ depends on $\tmin$ and $k$, but is independent of $h$. 
\end{theorem}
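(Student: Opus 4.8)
The plan is to assemble the three vector fields produced by Proposition \ref{prop1}, Lemma \ref{lemma2}, and Lemma \ref{lemma3} in the order that makes each subtraction well-defined, and then track the constants. Let $p \in Q_h^{k-1}$ be given. First I would invoke Proposition \ref{prop1} to obtain $v_1 \in V_h^2 \subset V_h^k$ with $\int_T \dive v_1\,dx = \int_T p\,dx$ for every $T \in \Th$ and $\|v_1\|_{H^1(\Omega)} \le \alpha_1 \|p\|_{L^2(\Omega)}$. Set $p_1 = \dive v_1 - p$. By Lemma \ref{lemma1}, $\dive v_1 \in Q_h^{k-1}$, so $p_1 \in Q_h^{k-1}$ as well, and by construction $\int_T p_1\,dx = 0$ for all $T$. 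A standard inverse/stability estimate on each triangle (the one already used inside the proof of Lemma \ref{lemma3}, mapping to a reference element) gives $\|p_1\|_{L^2(\Omega)} \le C \|v_1\|_{H^1(\Omega)} + \|p\|_{L^2(\Omega)} \le C(\alpha_1+1)\|p\|_{L^2(\Omega)}$.

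Next I would apply Lemma \ref{lemma2} to $p_1$: since $p_1 \in Q_h^{k-1}$ and has triangle averages zero, there is $v_2 \in V_h^4 \subset V_h^k$ with $(\dive v_2 - p_1)(z) = 0$ for all $z \in \Sh$, with $\int_K \dive v_2\,dx = 0$ for all $K \in \Th$, and with $\|\nabla v_2\|_{L^2(\Omega)} \le C_1(\tfrac{1}{\tmin}+1)\|p_1\|_{L^2(\Omega)}$. Put $p_2 = \dive v_2 - p_1$. Then $p_2 \in Q_h^{k-1}$ (again using Lemma \ref{lemma1}), $p_2$ vanishes at every vertex of $\Sh$, and $\int_K p_2\,dx = \int_K \dive v_2\,dx - \int_K p_1\,dx = 0$ for every $K$. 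Since $p_2$ vanishes at the vertices, it in particular satisfies $A_h^z(p_2)=0$ trivially; more importantly it now meets exactly the hypotheses of Lemma \ref{lemma3}. Bounding $\|p_2\|_{L^2(\Omega)}$ costs another inverse estimate: $\|p_2\|_{L^2(\Omega)} \le C\|\nabla v_2\|_{L^2(\Omega)} + \|p_1\|_{L^2(\Omega)} \le C(\tfrac{1}{\tmin}+1)\|p_1\|_{L^2(\Omega)}$, with a Poincar\'e inequality on $\Omega$ used, if needed, to control $\|v_2\|_{L^2(\Omega)}$ by $\|\nabla v_2\|_{L^2(\Omega)}$.

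Finally I would apply Lemma \ref{lemma3} to $p_2$ to get $v_3 \in V_h^k$ with $\dive v_3 = p_2$ on $\Omega$ and $\|v_3\|_{H^1(\Omega)} \le \alpha_2 \|p_2\|_{L^2(\Omega)}$. Setting $v = v_1 - v_2 + v_3$ (or $v_1+v_2+v_3$ with sign bookkeeping done consistently), one computes
\begin{equation*}
\dive v = \dive v_1 - \dive v_2 + \dive v_3 = (p + p_1) - (p_1 + p_2) + p_2 = p \quad \text{on } \Omega.
\end{equation*}
Hence $\int_\Omega p \dive v\,dx = \|p\|_{L^2(\Omega)}^2$, while chaining the three norm bounds gives $\|v\|_{H^1(\Omega)} \le C(\tmin,k)\|p\|_{L^2(\Omega)}$. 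Taking $\beta = 1/C(\tmin,k)$ and noting $v \not\equiv 0$ whenever $p \not\equiv 0$ yields \eqref{inf-sup}. One small point to check is that $v$ has zero boundary trace and lies in $V_h^k$: this is inherited from $v_1 \in V_h^2$, from $v_2 \in V_h^4$ (the fundamental vector fields of Lemmas \ref{lemmavz}–\ref{lemma0} are supported in interior patches, and for boundary vertices use the boundary-vertex enumeration), and from $v_3 \in V_h^k$ (each $v_T$ vanishes on $\partial T$).

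The only genuinely delicate ingredient is the dependence of $\beta$ on $\tmin$: because Lemma \ref{lemma2} degrades like $1/\tmin$ near nearly-singular non-singular vertices, the final constant blows up as $\tmin \to 0$, which is exactly why the theorem statement allows $\beta$ to depend on $\tmin$. Everything else is routine norm-chaining plus the per-triangle inverse estimates $\|q\|_{L^\infty(T)} \le C h_T^{-1}\|q\|_{L^2(T)}$ already invoked in the preceding lemmas; I do not anticipate any obstruction there beyond careful sign bookkeeping in $v = v_1 - v_2 + v_3$.
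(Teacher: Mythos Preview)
Your proposal is correct and follows essentially the same three-step strategy as the paper: use Proposition~\ref{prop1} to kill triangle averages, then Lemma~\ref{lemma2} to match vertex values while preserving zero averages, then Lemma~\ref{lemma3} to finish locally. The only difference is your sign convention ($p_1=\dive v_1-p$ rather than $p-\dive v_1$), which you handle correctly by taking $v=v_1-v_2+v_3$; the paper simply writes $v=v_1+v_2+v_3$ with the opposite convention. Two cosmetic remarks: Lemma~\ref{lemma2} does not actually need the input to have zero triangle averages, and the bounds $\|p_i\|_{L^2}\le\|\dive v_i\|_{L^2}+\|p_{i-1}\|_{L^2}$ follow from the pointwise inequality $|\dive w|\le|\nabla w|$ and the triangle inequality---no inverse estimate or Poincar\'e is needed at that step.
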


\begin{proof}
Let $p \in Q_h^{k-1}$. Let $v_1 \in V_h^2$ be from Proposition \ref{prop1} and let $p_1=p-\dive v_1$.   We have that $\int_T p_1 \, dx =0 $ for all $T \in \Th$. By Lemma \ref{lemma1} we have that $p_1 \in Q_h^{k-1}$.  Given $p_1$  let $v_2 \in V_h^4$   be the corresponding vector field from Lemma \ref{lemma2}. Then,  $p_2= p_1-\dive v_2$ satisfies $\int_{T} p_2 \, dx =0 $ for all  $T \in \Th$ and $p_2$ vanishes at all the vertices.  We can, therefore, apply Lemma \ref{lemma3} and have a $v_3 \in V_h^k$ so that $\dive v_3= p_2$ on $\Omega$. Setting $v=v_1+v_2 +v_3 \in V_h^k$ we have
\begin{equation*}
\dive v= p  \text{ on } \Omega,
\end{equation*}
and
\begin{equation*}
\|\nabla v\|_{L^2(\Omega)} \le  (\alpha_2 \|p_2\|_{L^2(\Omega)} + C_1\Big( \frac{1}{\tmin}+1\Big)\|p_1\|_{L^2(\Omega)} + \alpha_1 \|p\|_{L^2(\Omega)}) \le C_2\Big( \frac{1}{\tmin}+1\Big) \|p\|_{L^2(\Omega)},
\end{equation*}
where $C_2$  depends only on shape regularity, $k$ and $\Omega$. Therefore, using Poincare's inequality
\begin{equation*}
\|p\|_{L^2(\Omega)}^2= \int_{\Omega} p \dive v \, dx \le   \|v\|_{H^1(\Omega)} \sup_{ w \in V_h^k, w \neq 0}  \frac{\int_{\Omega} p \dive w \, dx } { \|w \|_{H^1(\Omega)}}.  
\end{equation*}
Hence, 
\begin{equation*}
\|p\|_{L^2(\Omega)} \le  C_2\Big( \frac{1}{\tmin}+1\Big) \sup_{ w \in V_h^k, w \neq 0}  \frac{\int_{\Omega} p \dive w \, dx } { \|w \|_{H^1(\Omega)}}.  
\end{equation*}
The result now follows by letting $\beta =1/\big(C_2 \big(\frac{1}{\tmin}+1\big)\big) $.
\end{proof}

\section{Appendix: Proof of Lemma \ref{lemmasingular}}
 
Let $z \in S_h^2$. 
We first assume that $y$ is an interior vertex.   
In this case,  $\Omega_h(y)= \bigcup_{i=1}^4 T_i$; see Figure \ref{Fig1}.    
Our hypothesis tells us that $v|_{T_i} \in C^1(\overline{T_i})$ for each $i$ and 
that $v \in C(\Omega_h(y))$. 
Let $F(\hat{x})=M \hat{x}+z$ where $M=[t_1, t_2]$ is a $2 \times 2$ matrix.  
Define $\hat{v}(\hat{x})= M^{-1} v(F(\hat{x}))$. 
Note that this is the Piola transformation.  
Let  $R_1, \ldots, R_4$  be the four standard quadrants, and let $L_1, \ldots, L_4$ 
be the four semi-finite lines; see Figure \ref{Fig2}. 
If we let $B$ be a ball with center at the origin and with small enough radius then we 
know that $\hat{v} \in C(B)$ and $\hat{v}|_{B \cap R_i} \in C^1(\overline{B \cap R_i})$ 
for each $i$.  A standard calculation shows that
\begin{equation*}
\dive v(F(\hat{x}))=\dive \hat{v}(\hat{x}) \quad \text{ for } \hat{x} \in  B.
\end{equation*}
Therefore,
\begin{equation*}
A_h^y(\dive  v)= \sum_{i=1}^4 (-1)^{4-i} \dive v|_{T_i}(z)=\sum_{i=1}^4 (-1)^{4-i} \dive \hat{v}|_{R_i}(0).
\end{equation*}

If we let $\hat{v}^i= \hat{v}|_{T_i}$ then we see that
\begin{alignat*}{1}
\sum_{i=1}^4 (-1)^{4-i} \dive \hat{v}|_{R_i}(0)= & -\dive \hat{v}^1(0)+\dive \hat{v}^2(0)-  \dive \hat{v}^3(0)+\dive \hat{v}^4(0)\\
= &-( \partial_1 \hat{v}_1^1+ \partial_2 \hat{v}_2^1)(0)+(\partial_1 \hat{v}_1^2(0) + \partial_2 \hat{v}_2^2(0)) \\
&-( \partial_1 \hat{v}_1^3+ \partial_2 \hat{v}_2^3)(0)+ (\partial_1 \hat{v}_1^4+ \partial_2 \hat{v}_2^2)(0)\\
=& -\partial_1( \hat{v}_1^1 -\hat{v}_1^4)(0)-\partial_1( \hat{v}_1^3 -\hat{v}_1^2)(0)\\
& -\partial_2 ( \hat{v}_2^1-  \hat{v}_2^2)(0)  -\partial_2 ( \hat{v}_2^3-  \hat{v}_2^4)(0) \\
=&0.
\end{alignat*}
In the last step we used that since $\hat{v}$ is continuous $\hat{v}_1^1 -\hat{v}_1^4$ is identically zero on $L_1 \cap B$. Similarly,    $\hat{v}_2^1 -\hat{v}_2^2$ vanishes on $L_2 \cap B$ and so on.
This proves that $A_h^y(\dive  v)=0$.  If, $y \in S_h^2$ is a boundary vertex then we extend $v$ by zero to $\Omega^c$ and apply the previous result. 

\begin{figure}
\vspace{-110pt}
\centerline{\includegraphics[width=12in]{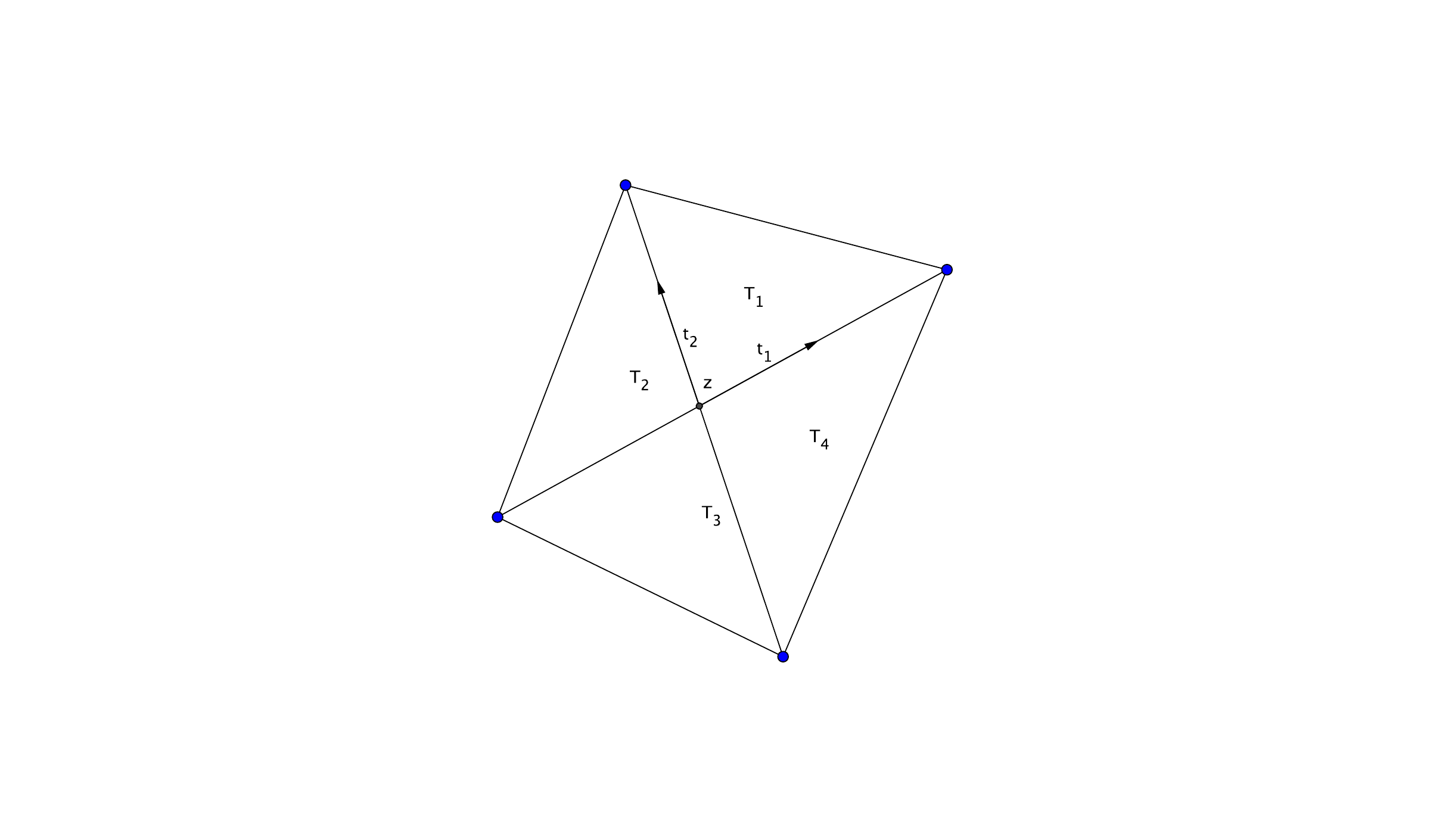}}
\vspace{-110pt}
  \caption{Illustration of $\Omega_h(z)$}
  \label{Fig1}
\end{figure}%
\begin{figure}
 \centerline{\includegraphics[width=5in]{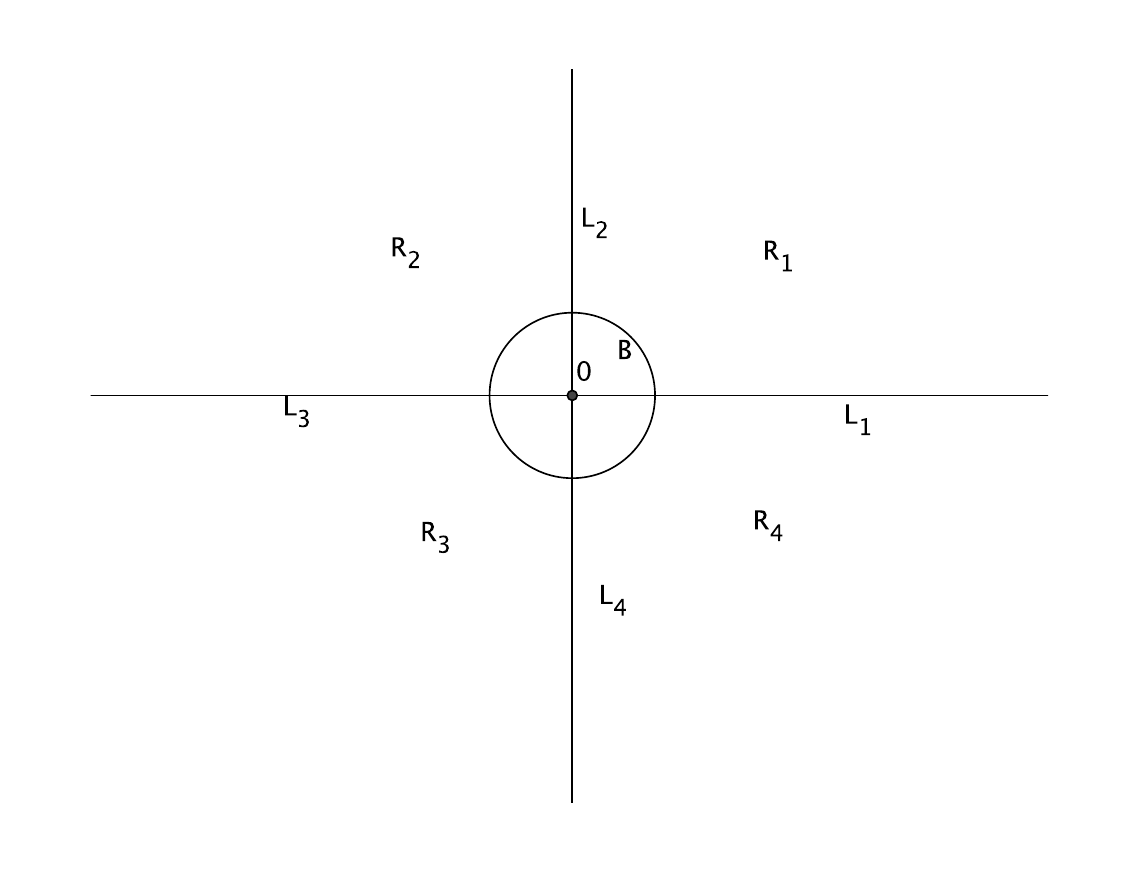}}
\vspace{-30pt}
  \caption{Illustration of four quadrants}
  \label{Fig2}
\end{figure}

\bibliography{Bibliography_BGSS}{}

\begin{thebibliography}{10}

\bibitem{arnold1992quadratic}
D.~N. Arnold and J.~Qin.
\newblock Quadratic velocity/linear pressure stokes elements.
\newblock {\em Advances in computer methods for partial differential
  equations}, 7:28--34, 1992.

\bibitem{bernardi1985analysis}
C.~Bernardi and G.~Raugel.
\newblock Analysis of some finite elements for the stokes problem.
\newblock {\em Mathematics of Computation}, pages 71--79, 1985.

\bibitem{brenner2007mathematical}
S.~Brenner and R.~Scott.
\newblock {\em The mathematical theory of finite element methods}, volume~15.
\newblock Springer Science \& Business Media, 2007.

\bibitem{cockburn2005locally}
B.~Cockburn, G.~Kanschat, and D.~Sch{\"o}tzau.
\newblock A locally conservative ldg method for the incompressible
  navier-stokes equations.
\newblock {\em Mathematics of Computation}, 74(251):1067--1095, 2005.

\bibitem{evans2013isogeometric}
J.~A. Evans and T.~J. Hughes.
\newblock Isogeometric divergence-conforming b-splines for the unsteady
  navier--stokes equations.
\newblock {\em Journal of Computational Physics}, 241:141--167, 2013.

\bibitem{falk2013stokes}
R.~S. Falk and M.~Neilan.
\newblock Stokes complexes and the construction of stable finite elements with
  pointwise mass conservation.
\newblock {\em SIAM Journal on Numerical Analysis}, 51(2):1308--1326, 2013.

\bibitem{guzman2012family}
J.~Guzm{\'a}n and M.~Neilan.
\newblock A family of nonconforming elements for the brinkman problem.
\newblock {\em IMA Journal of Numerical Analysis}, page drr040, 2012.

\bibitem{guzman2014conforming2}
J.~Guzm{\'a}n and M.~Neilan.
\newblock Conforming and divergence-free stokes elements in three dimensions.
\newblock {\em IMA Journal of Numerical Analysis}, 34(4):1489--1508, 2014.

\bibitem{guzman2014conforming}
J.~Guzm{\'a}n and M.~Neilan.
\newblock Conforming and divergence-free stokes elements on general triangular
  meshes.
\newblock {\em Mathematics of Computation}, 83(285):15--36, 2014.

\bibitem{2016arXiv160508657H}
S.~{Harald Christiansen} and K.~{Hu}.
\newblock {Generalized Finite Element Systems for smooth differential forms and
  Stokes problem}.
\newblock {\em ArXiv e-prints}, May 2016.

\bibitem{john2016divergence}
V.~John, A.~Linke, C.~Merdon, M.~Neilan, and L.~G. Rebholz.
\newblock On the divergence constraint in mixed finite element methods for
  incompressible flows.
\newblock {\em SIAM Review}, 2016.

\bibitem{linke2009collision}
A.~Linke.
\newblock Collision in a cross-shaped domain--a steady 2d navier--stokes
  example demonstrating the importance of mass conservation in cfd.
\newblock {\em Computer Methods in Applied Mechanics and Engineering},
  198(41):3278--3286, 2009.

\bibitem{lovadina2015divergence}
C.~Lovadina and G.~Vacca.
\newblock Divergence free virtual elements for the stokes problem on polygonal
  meshes.
\newblock {\em ESAIM: Mathematical Modelling and Numerical Analysis}, 2015.

\bibitem{neilan2015discrete}
M.~Neilan.
\newblock Discrete and conforming smooth de rham complexes in three dimensions.
\newblock {\em Mathematics of Computation}, 84(295):2059--2081, 2015.

\bibitem{ref:QinThesis}
J.~Qin.
\newblock {\em On the convergence of some low order mixed finite elements for
  incompressible fluids}.
\newblock PhD thesis, Penn State, 1994.

\bibitem{scott1985norm}
L.~Scott and M.~Vogelius.
\newblock Norm estimates for a maximal right inverse of the divergence operator
  in spaces of piecewise polynomials.
\newblock {\em RAIRO-Mod{\'e}lisation math{\'e}matique et analyse
  num{\'e}rique}, 19(1):111--143, 1985.

\bibitem{scott1984conforming}
L.~R. Scott and M.~Vogelius.
\newblock Conforming finite element methods for incompressible and nearly
  incompressible continua.
\newblock Technical report, DTIC Document, 1984.

\bibitem{tai2006discrete}
X.-C. Tai and R.~Winther.
\newblock A discrete de rham complex with enhanced smoothness.
\newblock {\em Calcolo}, 43(4):287--306, 2006.

\bibitem{vogelius1983right}
M.~Vogelius.
\newblock A right-inverse for the divergence operator in spaces of piecewise
  polynomials.
\newblock {\em Numerische Mathematik}, 41(1):19--37, 1983.

\bibitem{zhang2007family}
S.~Zhang.
\newblock On the family of divergence-free finite elements on tetrahedral grids
  for the stokes equations.
\newblock {\em Preprint University of Delaware}, 2007.

\bibitem{zhang2011divergence}
S.~Zhang.
\newblock Divergence-free finite elements on tetrahedral grids for $k \ge 6$.
\newblock {\em Mathematics of Computation}, 80(274):669--695, 2011.

\end{thebibliography}
\bibliographystyle{abbrv}

\end{document}